\numberwithin{equation}{section}
\newtheorem{theorem}{Theorem}[section]
\newtheorem{proposition}[theorem]{Proposition}
\newtheorem{lemma}[theorem]{Lemma}
\newtheorem{corollary}[theorem]{Corollary}
\theoremstyle{definition}
\newtheorem{definition}[theorem]{Definition}
\newtheorem{question}[theorem]{Question}
\theoremstyle{remark}
\newtheorem{remark}[theorem]{Remark}
\newcommand{\Z}{\mathcal{Z}}
\newcommand{\RZ}{\mathbb{R}\mathcal{Z}}
\title{Golodness and polyhedral products for two dimensional simplicial complexes}
\author{Kouyemon Iriye}
\address{Department of Mathematics and Information Sciences, Osaka
Prefecture University, Sakai, 599-8531, Japan}
\email{kiriye@mi.s.osakafu-u.ac.jp}
\author{Daisuke Kishimoto}
\address{Department of Mathematics, Kyoto University, Kyoto, 606-8502, Japan}
\email{kishi@math.kyoto-u.ac.jp}
\thanks{K.I. is supported by JSPS KAKENHI (No. 26400094), and D.K. is supported by JSPS KAKENHI (No. 25400087)}
\subjclass[2010]{13F55,55P15}
\keywords{Stanley-Reisner ring, Golodness, neighborliness, moment-angle complex, polyhedral product, fat wedge filtration}
\begin{document}

\maketitle

\begin{abstract}
Golodness of 2-dimensional simplicial complexes is studied through polyhedral products, and combinatorial and topoogical characterization of Golodness of surface triangulations is given. An answer to the question of Berglund \cite{B} is also given so that there is a 2-dimensional simplicial complex which is rationally Golod but is not Golod over $\mathbb{Z}/p$.
\end{abstract}

\baselineskip 17pt


\section{Introduction}
Throughout this paper, let $K$ denote a simplicial complex on the vertex set $[m]:=\{1,\ldots,m\}$. The Stanley-Reisner ring of $K$ over a commutative ring $\Bbbk$ is by definition
$$\Bbbk[K]:=\Bbbk[v_1,\ldots,v_m]/(v_{i_1}\cdots v_{i_k}\,\vert\,\{i_1,\ldots,i_k\}\not\in K)$$
where we assume $|v_i|=2$ for convenience. The Stanley-Reisner rings have been a prominent object in combinatorial commutative algebra, and have been also a constant source of applications in a broad area of mathematics. In this paper we focus on the property of the Stanley-Reisner rings called Golodness which has been intensively studied in combinatorial commutative algebra. Originally Golodness of a graded $\Bbbk$-algebra $R$ is defined by a certain equality involving the Poincar\'e series of the cohomology of $R$, and Golod \cite{G} showed that this is equivalent to the triviality of products and (higher) Massey products in the positive part of the derived algebra $\mathrm{Tor}^+_{\Bbbk[v_1,\ldots,v_m]}(R,\Bbbk)$, where $R$ is the quotient of $\Bbbk[v_1,\ldots,v_m]$ and the ring structure is induced from that of the Koszul resolution of $\Bbbk$ over $\Bbbk[v_1,\ldots,v_m]$. Golodness of $K$ is defined by that of $\Bbbk[K]$, so we have:

\begin{definition}
A simplicial complex $K$ is Golod over $\Bbbk$ if all products and (higher) Massey products in $\mathrm{Tor}^+_{\Bbbk[v_1,\ldots,v_m]}(\Bbbk[K],\Bbbk)$ are trivial, where the ring structure is induced from the Koszul resolution of $\Bbbk$ over $\Bbbk[v_1,\ldots,v_m]$. 
\end{definition}

\begin{remark}
It is claimed in \cite{BJ} that the condition on (higher) Massey products in the definition of Golodness, i.e. $K$ is Golod over $\Bbbk$ if and only if all products in $\mathrm{Tor}^+_{\Bbbk[v_1,\ldots,v_m]}(\Bbbk[K],\Bbbk)$ are trivial. But this is recently shown to be false in \cite{K2}.
\end{remark}

We simply say that $K$ is Golod if it is Golod over any ring. Since the Stanley-Reisner ring of $K$ is isomorphic to the cohomology of a space which is now called the Davis-Januszkiewicz space \cite{DJ}, we can study the Stanley-Reisner rings by investigating the topology of spaces related to the Davis-Januszkiewicz spaces. In particular, there is a ring isomorphism
\begin{equation}
\label{Z-Tor}
H^*(\Z_K;\Bbbk)\cong\mathrm{Tor}^*_{\Bbbk[v_1,\ldots,v_m]}(\Bbbk[K],\Bbbk),
\end{equation}
so we aim at characterizing Golodness in terms of the topology of $\Z_K$, where $\Z_K$ is a space called the moment-angle complex for $K$ and is related to the Davis-Januszkiewicz space through a certain homotopy fibration \cite{DS}. The easiest case that $K$ is Golod is when $\Z_K$ is a co-H-space, and in \cite{IK} it is shown that the existence of a co-H-structure on $\Z_K$ is implied by triviality of a certain filtration of the real moment-angle complex $\RZ_K$ which is called the fat wedge filtration and will be explained later. As an application, we showed:

\begin{theorem}
[{\cite[Theorem 11.8]{IK}}]
\label{1-dim}
For a 1-dimensional $K$, the following conditions are equivalent; 
\begin{enumerate}
\item $K$ is Golod; 
\item the 1-skeleton of $K$ is chordal; 
\item The fat wedge filtration of $\RZ_K$ is trivial. 
\end{enumerate}
\end{theorem}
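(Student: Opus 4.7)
The plan is to establish the cyclic chain $(2)\Rightarrow(3)\Rightarrow(1)\Rightarrow(2)$. The implication $(3)\Rightarrow(1)$ is almost immediate from the machinery summarized above: triviality of the fat wedge filtration of $\RZ_K$ produces a co-H-space structure on $\Z_K$ by the results of \cite{IK}, and the reduced cohomology of any co-H-space has vanishing cup products and vanishing higher Massey products, so the ring isomorphism \eqref{Z-Tor} forces $K$ to be Golod over every $\Bbbk$.

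For $(2)\Rightarrow(3)$ I would induct on the number of vertices using the perfect elimination ordering that characterizes chordal graphs. In a chordal $K$ there exists a simplicial vertex $v$---one whose open neighborhood is a clique in $K$---and deleting $v$ yields a smaller chordal complex $K'$. By induction the fat wedge filtration of $\RZ_{K'}$ is trivial. The fat wedge filtration of $\RZ_K$ is built stagewise using data encoded by the full subcomplexes $K_I$, and I would compare it with that of $\RZ_{K'}$, producing an explicit trivialization of the new stages attached to $v$. The clique condition on $N(v)$ is what makes the relevant attaching maps nullhomotopic; this is the main technical step.

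For $(1)\Rightarrow(2)$ I would argue contrapositively. If $K$ is not chordal it contains an induced cycle $C_n$ with $n\geq 4$, and because $K$ is $1$-dimensional this $C_n$ is a full subcomplex of $K$. Golodness is inherited by full subcomplexes: the inclusion $\Z_{K_I}\hookrightarrow\Z_K$ is retracted by collapsing the torus coordinates outside $I$, so under \eqref{Z-Tor} any nontrivial cup or Massey product in $H^*(\Z_{C_n};\Bbbk)$ would pull back to a nontrivial operation in $H^*(\Z_K;\Bbbk)$. It therefore suffices to check that $C_n$ is not Golod for $n\geq 4$. For $n=4$ one has the splitting $C_4=\{1,3\}\ast\{2,4\}$, giving $\Z_{C_4}\cong S^3\times S^3$ with its evident nontrivial cup product; for $n\geq 5$ the Hochster-type decomposition of $H^*(\Z_{C_n};\Bbbk)$ into contributions from full subcomplexes produces nontrivial triple Massey products on classes coming from disconnected subcomplexes of $C_n$. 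Either outcome contradicts the assumed Golodness of $K$, completing the cycle.

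The principal obstacle is the inductive step in $(2)\Rightarrow(3)$: one must identify precisely how the fat wedge filtration of $\RZ_K$ changes upon deletion of a simplicial vertex and verify that the chordality hypothesis furnishes the nullhomotopy needed to split off the new stage. The other two implications reduce to invoking the co-H-space criterion from \cite{IK}, the cohomology of co-H-spaces, and the standard moment-angle computations for cycles.
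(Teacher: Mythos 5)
This theorem is quoted from \cite[Theorem 11.8]{IK} rather than proved in the present paper, but its key implication $(2)\Rightarrow(3)$ is re-derived here in a slightly more general form (Proposition~\ref{hodim<2}), and that is the right benchmark for your plan. Your $(3)\Rightarrow(1)$ is exactly the intended argument: Theorem~\ref{RZ-decomp} makes $\Z_K$ a suspension, so all products and higher Massey products in $\widetilde H^*(\Z_K;\Bbbk)$ vanish, and \eqref{Z-Tor} delivers Golodness over every $\Bbbk$. Your $(1)\Rightarrow(2)$ is also correct in substance: the paper invokes \cite[Proposition 8.17]{IK}, whose engine is Proposition~\ref{Hochster}, while your route via the retraction $\Z_{K_I}\hookrightarrow\Z_K$ and the non-Golodness of induced cycles $C_n$, $n\ge4$, is an acceptable substitute. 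One small inaccuracy: for $n\ge5$ you do not need Massey products --- $\Z_{C_n}$ is a connected sum of products of spheres and already has nontrivial cup products, so ordinary Hochster pairings detect non-Golodness for every $n\ge4$.

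The genuine gap is $(2)\Rightarrow(3)$, and you acknowledge it yourself. Deleting a simplicial vertex and trying to track the resulting change in the fat wedge filtration of $\RZ_K$ puts the perfect elimination ordering to work in the wrong place: nothing in the available machinery controls how the attaching maps $\varphi_{K_I}$ behave under vertex deletion, and you leave the required nullhomotopy unproduced. The missing tool is Proposition~\ref{RZ-factor}: each $\varphi_{K_I}\colon|K_I|\to\RZ_{K_I}^{|I|-1}$ extends over $|\widehat{K_I}|$, the complex obtained by filling in the minimal non-faces of $K_I$. For a $1$-dimensional $K_I$, adding the $2$-dimensional minimal non-faces yields precisely the $2$-skeleton of the clique complex of the (hereditarily chordal) graph $K_I^{(1)}$; each component of the clique complex of a chordal graph is contractible --- and this is where simplicial vertices and the perfect elimination ordering actually do their work, by collapsing --- so that $2$-skeleton is simply connected. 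Connecting the components by $1$-dimensional non-faces produces a simply connected $L$ with $K_I\subset L\subset\widehat{K_I}$, and Lemma~\ref{null-homotopic} then kills $\varphi_{K_I}$. Running this over all $\emptyset\ne I\subset[m]$ gives triviality of the fat wedge filtration. Without Proposition~\ref{RZ-factor}, your inductive scheme has no mechanism to manufacture the nullhomotopies it needs.
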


In this paper, we continue to study Golodness through the fat wedge filtration of $\RZ_K$ when $K$ is 2-dimensional, which is suggested in \cite{IK}. We will prove:

\begin{theorem}
\label{main1}
For a surface triangulation $K$, the following conditions are equivalent:
\begin{enumerate}
\item $K$ is Golod;
\item $K$ is 1-neighborly;
\item The fat wedge filtration of $\RZ_K$ is trivial.
\end{enumerate}
\end{theorem}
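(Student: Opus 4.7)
The plan is to establish the cycle $(3) \Rightarrow (1) \Rightarrow (2) \Rightarrow (3)$. The implication $(3) \Rightarrow (1)$ is immediate from the result recalled from \cite{IK}: triviality of the fat wedge filtration of $\RZ_K$ forces $\Z_K$ to be a co-$H$-space, and every co-$H$-space is Golod over any coefficient ring since all products and Massey products in its cohomology vanish.

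For $(1) \Rightarrow (2)$ I would argue contrapositively. Suppose $K$ is a surface triangulation that is not 1-neighborly, and pick a missing edge $\{i,j\}$. Then $K_{\{i,j\}}$ is two disjoint vertices, so $\tilde H^0(K_{\{i,j\}})\neq 0$ produces a nonzero class $\alpha \in H^3(\Z_K)$ via the Baskakov--Buchstaber--Panov isomorphism
$$\tilde H^k(\Z_K)\cong\bigoplus_{I\subseteq[m]}\tilde H^{k-|I|-1}(K_I).$$
The cup product of two classes supported on disjoint $I,J$ is the composite of the join--K\"unneth isomorphism $\tilde H^{p-1}(K_I)\otimes \tilde H^{q-1}(K_J)\cong \tilde H^{p+q-1}(K_I*K_J)$ with the restriction along the canonical inclusion $K_{I\cup J}\hookrightarrow K_I*K_J$. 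Taking $J=[m]\setminus\{i,j\}$, the join $K_{\{i,j\}}*K_J$ becomes the suspension $\Sigma K_J$ and $K_{I\cup J}=K$. I would then verify that $\tilde H^1(K_J)\neq 0$ (the induced subcomplex on the complement of a missing edge in a closed surface carries an essential cycle) and that the restriction $\tilde H^2(\Sigma K_J)\to \tilde H^2(K)$ hits a generator of the top class of the surface (working with $\mathbb{Z}/2$ coefficients if $K$ is non-orientable). This yields a non-trivial cup product involving $\alpha$ and contradicts Golodness.

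The implication $(2) \Rightarrow (3)$ is the technical heart of the theorem. For a 1-neighborly surface triangulation $K$, the fat wedge filtration of $\RZ_K$ is indexed by subsets $I\subseteq[m]$ with attaching data determined by the full subcomplex $K_I$, and its triviality reduces to an inductive null-homotopy argument stage by stage. The 1-neighborly hypothesis trivializes the stages with $|I|\le 2$ since $K_I$ is then a simplex. For $3\le|I|\le m-1$ the subcomplex $K_I$ is a proper full subcomplex of a closed surface whose combinatorial type (a graph with possibly some filled triangles) can be systematically analyzed to build explicit null-homotopies of the corresponding attaching maps. The main obstacle I anticipate is the top stage $|I|=m$, where $K_I=K$ itself carries the fundamental class of the surface: here the null-homotopy of the attaching map must be engineered directly, exploiting the completeness of the 1-skeleton to exhibit an explicit extension inside $\RZ_K$ which uses the full edge set of $K$. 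This top-dimensional case is where the 1-neighborly hypothesis must do its real work, since it provides exactly the extra edges needed to fill in the attaching sphere by a geometric cone in $\RZ_K$.
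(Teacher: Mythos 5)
Your cycle $(3)\Rightarrow(1)\Rightarrow(2)\Rightarrow(3)$ is the right shape, and $(3)\Rightarrow(1)$ is exactly the paper's argument. For $(1)\Rightarrow(2)$ you have the correct strategy (contrapositive via Hochster's criterion, i.e.\ the cup-product description of $H^*(\Z_K)$), but the step you defer --- ``verify that the restriction $\tilde{H}^2(\Sigma K_J)\to\tilde{H}^2(K)$ hits a generator'' --- is the whole point and has to be made explicit. The paper uses the surface hypothesis precisely here: pick $v,w$ not joined by an edge. Then $\mathrm{lk}_K(v)$ is a circle, $|K_{[m]\setminus\{v\}}|$ is a disk with boundary $|\mathrm{lk}_K(v)|$, and deleting $w$ (an interior vertex, since $w\notin\mathrm{lk}_K(v)$) leaves a space retracting onto $|\mathrm{lk}_K(v)|$. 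The composite $|K|\to|K_{[m]\setminus\{v,w\}}|*|\{v,w\}|\to|\mathrm{lk}_K(v)|*|\{v,w\}|\cong S^2$ is then the pinch onto the top cell, hence nonzero on $H^2(-;\mathbb{Z}/2)$, and Proposition \ref{Hochster} gives that $K$ is not Golod over $\mathbb{Z}/2$. Merely knowing $\tilde{H}^1(K_J)\neq 0$ does not by itself control where the restriction lands, so this retract onto the link is the ingredient missing from your sketch.

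The larger misallocation is in $(2)\Rightarrow(3)$, which you identify as the technical heart and propose to prove by a fresh stage-by-stage cone-filling induction, with a supposedly delicate top stage $|I|=m$. In the paper this implication is a one-line citation: Theorem \ref{neighborly}, which is \cite[Theorem 1.6]{IK}, already states that $\lceil\dim K/2\rceil$-neighborliness forces triviality of the fat wedge filtration of $\RZ_K$, and for a $2$-dimensional complex this is exactly $1$-neighborliness. No new inductive analysis is needed there; the genuinely new content of the theorem is $(1)\Rightarrow(2)$, not $(2)\Rightarrow(3)$.
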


\begin{remark}
\begin{enumerate}
\item Although 1-neighborliness always implies Golodness of 2-dimensional simplicial complexes by Theorem \ref{neighborly} below, Golodness does not imply 1-neighborliness if we only assume that $K$ is 2-dimensional, that is, $K$ is a 2-dimensional simplicial complex which is not a surface triangulation. Indeed if $K$ is generated by facets $\{1,2,3\},\{3,4\}$ for $m=4$, then $\Z_K\simeq(S^1\times S^1)* S^1$, implying $K$ is Golod but is not 1-neighborly.
\item The implication (1) $\Rightarrow$ (2) of Theorem \ref{main1} is generalized when $K$ is a triangulation of a higher dimensional manifolds by Katth\"{a}n \cite{K2}.
\end{enumerate}
\end{remark}

We apply the method developed for a 2-dimensional $K$ in proving Theorem \ref{main1} to the question of Berglund \cite{B} which we recall here.

\begin{question}
\label{Berglund}
Is there a simplicial complex which is Golod over $\mathbb{Q}$ but is not Golod over $\mathbb{Z}/p$?
\end{question}

\begin{theorem}
\label{main2}
There is a triangulation of the 2-dimensional mod $p$ Moore space which is Golod over $\mathbb{Q}$ but is not Golod over $\mathbb{Z}/p$.
\end{theorem}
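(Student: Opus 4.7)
My plan is to construct an explicit 2-dimensional triangulation $K$ of $M(\mathbb{Z}/p,1)$ and then verify Golodness over $\mathbb{Q}$ and non-Golodness over $\mathbb{Z}/p$ separately, using the subcomplex decomposition of $H^*(\Z_K;\Bbbk)$ coming from (\ref{Z-Tor}):
\[
\tilde{H}^{r}(\Z_K;\Bbbk) \cong \bigoplus_{I\subseteq[m]} \tilde{H}^{r-|I|-1}(K_I;\Bbbk),
\]
with products controlled by the canonical maps $|K_{I\cup J}|\to |K_I|*|K_J|$. The guiding principle is that $M(\mathbb{Z}/p,1)$ is $\mathbb{Q}$-acyclic but has $\mathbb{Z}/p$-cohomology in two consecutive degrees linked by a nontrivial Bockstein --- exactly the asymmetry Question \ref{Berglund} exploits.

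For Golodness over $\mathbb{Q}$, I would choose $K$ so that every proper full subcomplex $K_I$ is rationally as simple as possible (ideally $\mathbb{Q}$-acyclic, or at worst a rational wedge of circles) and then adapt the fat wedge filtration analysis behind Theorem \ref{main1} to rational coefficients. The target is to trivialize the rational fat wedge filtration of $\RZ_K$, producing a rational co-H structure on $\Z_K$; this yields Golodness over $\mathbb{Q}$ and, crucially, kills all rational Massey products at once rather than only cup products. The $\mathbb{Q}$-acyclicity of the Moore space is what makes the rational obstructions in this filtration vanish even though $K$ itself fails the combinatorial hypotheses of Theorem \ref{main1}.

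For non-Golodness over $\mathbb{Z}/p$, I would detect a nontrivial Massey product in $\tilde{H}^*(\Z_K;\mathbb{Z}/p)$ built from the Bockstein-linked classes in $\tilde{H}^1(K;\mathbb{Z}/p)$ and $\tilde{H}^2(K;\mathbb{Z}/p)$, in the spirit of the triple-Massey-product strategy of Katth\"an \cite{K2}. Concretely, I would choose a vertex subset $J$ so that the component of the putative Massey product living in the summand indexed by $J$ is computable from the Bockstein structure of a suitable full subcomplex $K_J$ and visibly nonzero modulo $p$. The indeterminacy has to be controlled, which is why the rational half of the argument (ensuring other products vanish) also helps here.

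The hard part will be engineering a single $K$ that meets both demands simultaneously: the combinatorics must be rich enough to produce a detecting full subcomplex $K_J$ supporting the mod-$p$ Massey product, yet rationally simple enough that the rational fat wedge filtration still trivializes. A secondary subtlety is that rational Golodness is certified by the rational co-H structure rather than by mere vanishing of cup products, so the rational half of the argument must be genuinely homotopy-theoretic; combinatorial vanishing of products would not suffice. Once a candidate $K$ is in hand the verification reduces to a finite check using the decomposition above, but finding the right $K$ is the crux.
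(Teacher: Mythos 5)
Your outline has the right guiding idea (exploit $\mathbb{Q}$-acyclicity versus mod-$p$ Bockstein of a Moore space), but it contains one genuine gap, one overcomplication, and of course leaves the construction itself unbuilt.

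\textbf{The gap in the rational half.} You propose to ``trivialize the rational fat wedge filtration of $\RZ_K$.'' This does not work as stated, for a reason the paper flags explicitly: the fat wedge filtration of $\RZ_K$ consists of cofibrations between \emph{non-simply-connected} spaces, and rationalization does not preserve cofibrations of non-simply-connected spaces. The paper circumvents this by first converting null homotopy of $\varphi_{K_I}$ into null homotopy of $\widehat{\varphi}_{K_I}$ (Proposition \ref{Z-RZ}), i.e.\ passing from $\RZ_K$'s filtration to $\Z_K$'s, whose stages are simply connected. Then, for $I\subsetneq[m]$, $\widehat{\varphi}_{K_I}$ is null homotopic \emph{integrally} (Lemma \ref{M} plus Proposition \ref{hodim<2} gives $\varphi_{K_I}\simeq *$, hence $\widehat{\varphi}_{K_I}\simeq *$), and only the top attaching map $\widehat{\varphi}_K\colon |K|*S^{m-1}\to \Z_K^{m-1}$ needs to be rationalized. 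Its domain is rationally contractible because $\Sigma M(p)$ is, so $(\widehat{\varphi}_K)_{(0)}\simeq *$ and $(\Z_K)_{(0)}$ is a suspension. Without the detour through $\Z_K$ the localization step is unjustified.

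\textbf{The overcomplication in the mod-$p$ half.} You reach for a Katth\"an-style triple Massey product to show non-Golodness over $\mathbb{Z}/p$, and you already anticipate the indeterminacy headache. The paper does this with a plain cup product via Hochster's criterion (Proposition \ref{Hochster}): choose two non-adjacent vertices $v,w$ of $M(p)$, note that $|\mathrm{lk}_{M(p)}(v)|$ is a retract of the full subcomplex $|M(p)_{[m]\setminus\{v,w\}}|$, and check that the composite
\[
|M(p)|\longrightarrow |M(p)_{[m]\setminus\{v,w\}}|*|\{v,w\}|\longrightarrow |\mathrm{lk}_{M(p)}(v)|*|\{v,w\}|\cong S^2
\]
is the pinch map onto the top cell, hence nonzero on mod-$p$ $H^2$. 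This directly violates Hochster's criterion with $I=[m]\setminus\{v,w\}$, $J=\{v,w\}$. No Massey product and no indeterminacy analysis is needed; the obstruction lives in cup products.

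\textbf{The missing construction.} Everything above hinges on an explicit $M(p)$ whose proper full subcomplexes all have homotopy dimension $\le 1$ and whose 1-skeleton is chordal (Lemma \ref{M}); chordality is verified via a perfect elimination ordering, and these properties are exactly what make the integral null-homotopy of $\widehat{\varphi}_{K_I}$ for $I\subsetneq[m]$ go through. You correctly identify that finding such a $K$ is the crux, but a proof must actually produce it and verify Lemma \ref{M} for it; the paper does so with a concrete simplicial complex (the minimal triangulation of $\RR P^2$ for $p=2$, and a $3p$-gon construction for $p>2$).
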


\begin{remark}
Katth\"{a}n \cite{K1} also studied Question \ref{Berglund} and gave an example of a simplicial complex $K$ which is Golod over $\mathbb{Q}$ and is not Golod over $\mathbb{Z}/p$. Our approach to Question \ref{Berglund} is purely topological and Katth\"{a}n's approach is purely algebraic.
\end{remark}


\section{Polyhedral products and their fat wedge filtrations}

This section collects facts on a polyhedral product $\Z_K(C\underline{X},\underline{X})$ and its fat wedge filtration that we will use. We refer to \cite{IK} for details. For a sequence of pairs of spaces $(\underline{X},\underline{A}):=\{(X_i,A_i)\}_{i\in[m]}$, the polyhedral product is defined by
$$\Z_K(\underline{X},\underline{A}):=\bigcup_{\sigma\in K}(\underline{X},\underline{A})^\sigma\quad(X_1\times\cdots\times X_m)$$
where $(\underline{X},\underline{A})^\sigma=Y_1\times\cdots\times Y_m$ for $Y_i=X_i$ ($i\in\sigma$) and $Y_i=A_i$ ($i\not\in\sigma$). Let $\underline{X}:=\{X_i\}_{i\in[m]}$ be a sequence of pointd spaces, and put $(C\underline{X},\underline{X}):=\{(CX_i,X_i)\}_{i\in[m]}$. We consider the special polyhedral product $\Z_K(C\underline{X},\underline{X})$, where it is the moment angle complex when $X_i=S^1$ for all $i$ and is the real moment-angle complex $\RZ_K$ when $X_i=S^0$ for all $i$. We introduce the fat wedge filtration of $\Z_K(C\underline{X},\underline{X})$. Put
$$\Z_K^i(C\underline{X},\underline{X}):=\{(x_1,\ldots,x_m)\in\Z_K(C\underline{X},\underline{X})\,\vert\,\text{at least }m-i\text{ of }x_i\text{'s are the basepoints}\}$$
for $i=1,\ldots,m$. We regard $\Z_{K_I}(C\underline{X}_I,\underline{X}_I)$ as a subspace of $\Z_K(C\underline{X},\underline{X})$ by using the basepoints of $X_i$'s, where $K_I:=\{\sigma\in K\,\vert\,\sigma\subset I\}$ is the full subcomplex of $K$ on $I$ and $\underline{X}_I=\{X_i\}_{i\in I}$ for $I\subset[m]$. Then we have
$$\Z_K^i(C\underline{X},\underline{X})=\bigcup_{I\subset[m],\,|I|=i}\Z_{K_I}(C\underline{X}_I,\underline{X}_I).$$
We now get a filtration
$$*=\Z_K^0(C\underline{X},\underline{X})\subset\Z_K^1(C\underline{X},\underline{X})\subset\cdots\subset\Z_K^m(C\underline{X},\underline{X})=\Z_K(C\underline{X},\underline{X})$$
which is called the fat wedge filtration of $\Z_K(C\underline{X},\underline{X})$. As in \cite{IK}, in studying the fat wedge filtration of $\Z_K(C\underline{X},\underline{X})$, that of $\RZ_K$ is especially important. We recall some properties of the fat wedge filtration of $\RZ_K$ shown in \cite{IK}. 

\begin{theorem}
[{\cite[Theorem 3.1]{IK}}]
\label{RZ-cone}
There is a map $\varphi_{K_I}\colon|K_I|\to\RZ_{K_I}^{|I|-1}$ for each $\emptyset\ne I\subset[m]$ such that $\RZ_K^i$ is $\RZ_K^{i-1}$ attached cones by the maps $\varphi_{K_I}\colon|K_I|\to\RZ_{K_I}^{i-1}\subset\RZ_K^{i-1}$ for all $I\subset[m]$ with $|I|=i$.
\end{theorem}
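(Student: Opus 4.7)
The plan is to construct each $\varphi_{K_I}$ by hand from the cubical description of $\RZ_{K_I}\subset[-1,1]^I$ and to verify directly that $\RZ_{K_I}$ is literally the pushout of $\RZ_{K_I}^{|I|-1}\leftarrow|K_I|\hookrightarrow C|K_I|$. It suffices to handle the top case $I=[m]$; the general case then follows by applying the construction verbatim to the full subcomplex $K_I$ on vertex set $I$ and combining with the decomposition $\RZ_K^i=\bigcup_{|I|=i}\RZ_{K_I}$.

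Taking the basepoint of $S^0$ to be $1\in[-1,1]$, one has $\RZ_K=\bigcup_{\sigma\in K}[-1,1]^\sigma\times\{-1,1\}^{[m]\setminus\sigma}$, and unwinding the definition of the filtration yields
$$\RZ_K^{m-1}=\{x\in\RZ_K\mid x_i=1\text{ for some }i\},\qquad \RZ_K\setminus\RZ_K^{m-1}=\bigsqcup_{\sigma\in K}(-1,1)^\sigma\times\{-1\}^{[m]\setminus\sigma}.$$
Thus the complement is a disjoint union of open $|\sigma|$-cubes indexed by the faces of $K$, the empty face contributing the single point $(-1,\ldots,-1)$.

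Realise $|K|=\{(t_1,\ldots,t_m)\in\Delta^{m-1}\mid\{i:t_i>0\}\in K\}$ and define
$$F\colon|K|\times[0,1]\to\RZ_K,\qquad F\bigl((t_1,\ldots,t_m),s\bigr)=(u_1,\ldots,u_m)\text{ with }u_i=-1+\frac{2s\,t_i}{\max_j t_j}.$$
The denominator is positive since $\sum t_j=1$, so $F$ is continuous on the whole product, with no need to case-split along the faces of $|K|$; it is well-defined into $\RZ_K$ because the support of $(u_i)$ is contained in the support of $(t_i)\in K$. Since $F(\cdot,0)\equiv(-1,\ldots,-1)$, the map descends to $\widetilde F\colon C|K|\to\RZ_K$ on the cone $C|K|=|K|\times[0,1]/|K|\times\{0\}$, and the restriction $\varphi_K:=F(\cdot,1)$ lands in $\RZ_K^{m-1}$ because $u_i=1$ precisely when $t_i=\max_j t_j$. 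Inversion is immediate: given $(u_i)\in\RZ_K\setminus\RZ_K^{m-1}$ with $\sigma=\{i:u_i>-1\}\in K$, one recovers uniquely $s=\max_i(u_i+1)/2\in(0,1)$ and $t_i$ proportional to $(u_i+1)/(2s)$ normalised by $\sum t_i=1$. Together with the inclusion $\RZ_K^{m-1}\hookrightarrow\RZ_K$, this shows $\widetilde F$ induces a continuous bijection $\RZ_K^{m-1}\cup_{\varphi_K}C|K|\to\RZ_K$, hence a homeomorphism since both sides are compact Hausdorff.

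Running the same construction on each $K_I$ gives $\varphi_{K_I}\colon|K_I|\to\RZ_{K_I}^{|I|-1}$ with $\RZ_{K_I}=\RZ_{K_I}^{|I|-1}\cup_{\varphi_{K_I}}C|K_I|$. The identities $\RZ_K^i=\bigcup_{|I|=i}\RZ_{K_I}$ and $\RZ_{K_I}\cap\RZ_K^{i-1}=\RZ_{K_I}^{|I|-1}$, both obtained by counting basepoint coordinates, then deliver the asserted presentation of $\RZ_K^i$. The main obstacle I foresee is this last bookkeeping step: different $I,I'$ of size $i$ must not produce unwanted identifications among the attached cones. This is ruled out by $\RZ_{K_I}\cap\RZ_{K_{I'}}\subset\RZ_{K_{I\cap I'}}\subset\RZ_K^{i-1}$, so the images of the cone interiors for distinct $I$ are pairwise disjoint in $\RZ_K\setminus\RZ_K^{i-1}$ and no gluing takes place outside the previous stage.
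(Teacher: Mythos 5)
The paper does not actually reprove this statement — it is quoted verbatim as Theorem 3.1 of [IK], so there is no in-paper proof to compare against. Your argument, however, is a correct and self-contained direct proof, and it follows what is essentially the cubical/radial-projection construction used in [IK]: realize $\RZ_K\subset[-1,1]^m$ with basepoint $1$, observe that $\RZ_K^{m-1}=\{x:\max_ix_i=1\}$ is the outer boundary and that $\RZ_K\setminus\RZ_K^{m-1}$ stratifies into open cubes $(-1,1)^\sigma\times\{-1\}^{[m]\setminus\sigma}$ indexed by $\sigma\in K$, and then define $\varphi_K$ by radially projecting $|K|\subset\Delta^{m-1}$ from the inner corner $(-1,\dots,-1)$ onto that outer boundary, the formula $u_i=-1+2st_i/\max_jt_j$ giving the cone coordinate $s$.

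All the points that need verifying are in place: $F$ is continuous because $\max_jt_j>0$ on $\Delta^{m-1}$; $F(\cdot,0)$ is constant, so $F$ descends to $C|K|$; the restriction to $s=1$ lands in $\RZ_K^{m-1}$ because $\max_iu_i=1$ there; the explicit inverse $s=\max_i(u_i+1)/2$, $t_i\propto(u_i+1)/(2s)$ shows $\widetilde F$ is a bijection from the open cone onto $\RZ_K\setminus\RZ_K^{m-1}$; and the compact-Hausdorff argument upgrades the continuous bijection from the adjunction space to a homeomorphism. The passage from the top case to $\RZ_K^i$ is also handled correctly: you verify $\RZ_{K_I}\cap\RZ_K^{i-1}=\RZ_{K_I}^{|I|-1}$ by counting basepoint coordinates, and the inclusion $\RZ_{K_I}\cap\RZ_{K_{I'}}\subset\RZ_{K_{I\cap I'}}\subset\RZ_K^{i-1}$ for $I\neq I'$ rules out interactions between different attached cones, so $\RZ_K^i$ really is the simultaneous adjunction of the $C|K_I|$ to $\RZ_K^{i-1}$. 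I see no gap; this is a clean rendering of the cone decomposition.
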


We say that the fat wedge filtration of $\RZ_K$ is trivial if $\varphi_{K_I}$ is null homotopic for any $\emptyset\ne I\subset[m]$.

\begin{theorem}
[{\cite[Theorem 1.2]{IK}}]
\label{RZ-decomp}
If the fat wedge filtration of $\RZ_K$ is trivial, then for any $\underline{X}$ there is a homotopy equivalence
$$\Z_K(C\underline{X},\underline{X})\simeq\bigvee_{\emptyset\ne I\subset[m]}|\Sigma K_I|\wedge\widehat{X}^I$$
where $\widehat{X}^I:=\bigwedge_{i\in I}X_i$.
\end{theorem}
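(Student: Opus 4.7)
My plan is to extend Theorem \ref{RZ-cone} from the real moment-angle complex $\RZ_K$ to the general polyhedral product $\Z_K(C\underline{X},\underline{X})$, and then use null-homotopy of the attaching maps to split the fat wedge filtration into a wedge by induction.

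The first step is to establish, for each $\emptyset \ne I \subset [m]$, an attaching map
$$\Phi_{K_I} \colon |K_I| \wedge \widehat{X}^I \longrightarrow \Z_{K_I}^{|I|-1}(C\underline{X}_I,\underline{X}_I)$$
so that $\Z_K^i(C\underline{X},\underline{X})$ is built from $\Z_K^{i-1}(C\underline{X},\underline{X})$ by simultaneously attaching the cones on $|K_I| \wedge \widehat{X}^I$ along the $\Phi_{K_I}$'s for all $I$ with $|I|=i$. Moreover, the construction must ensure that $\Phi_{K_I}$ factors, up to homotopy, through $\varphi_{K_I} \wedge \mathrm{id}_{\widehat{X}^I}$, in the sense that there is a natural comparison
$$|K_I| \wedge \widehat{X}^I \xrightarrow{\varphi_{K_I} \wedge \mathrm{id}} \RZ_{K_I}^{|I|-1} \wedge \widehat{X}^I \longrightarrow \Z_{K_I}^{|I|-1}(C\underline{X}_I,\underline{X}_I)$$
whose composite is homotopic to $\Phi_{K_I}$. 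This is plausible since the polyhedral product $\Z_K(C\underline{X},\underline{X})$ replaces each real cube in the cell structure of $\RZ_K$ by a product of cones $\prod_{i \in I} CX_i$, and the top-cell attaching data for $\Z_{K_I}(C\underline{X}_I,\underline{X}_I)$ should thereby pull back from the universal attaching data $\varphi_{K_I}$ after smashing with $\widehat{X}^I$.

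Granting Step 1, the rest is an induction on $i$. The hypothesis $\varphi_{K_I} \simeq *$ makes $\varphi_{K_I} \wedge \mathrm{id}_{\widehat{X}^I}$ null-homotopic, hence so is $\Phi_{K_I}$ through its factorization. Attaching a cone along a null-homotopic map yields the target wedged, up to homotopy, with the reduced suspension of the source, so at each stage we obtain
$$\Z_K^i(C\underline{X},\underline{X}) \simeq \Z_K^{i-1}(C\underline{X},\underline{X}) \vee \bigvee_{|I|=i} \Sigma\bigl(|K_I| \wedge \widehat{X}^I\bigr).$$
Since $\Sigma(|K_I| \wedge \widehat{X}^I) \simeq |\Sigma K_I| \wedge \widehat{X}^I$, setting $i = m$ yields the desired decomposition.

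The principal obstacle is Step 1: producing the cone-attaching description for $\Z_K(C\underline{X},\underline{X})$ and identifying the attaching map in terms of $\varphi_{K_I} \wedge \mathrm{id}_{\widehat{X}^I}$. In the real case the data of Theorem \ref{RZ-cone} comes from an explicit cubical decomposition of $\RZ_K$, but in the general case one must express $\Z_{K_I}(C\underline{X}_I,\underline{X}_I)$ as the mapping cone of a map out of $|K_I| \wedge \widehat{X}^I$ and verify compatibility with the real case through a natural transformation of polyhedral products in $\underline{X}$. This combinatorial-topological comparison is the hardest part; once it is secured, the inductive splitting is purely formal.
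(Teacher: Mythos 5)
This theorem is cited from \cite{IK} and not proved in the present paper, but the paper nevertheless flatly contradicts the cornerstone of your plan. Immediately after Theorem~\ref{neighborly}, the authors write: ``In general we cannot describe the fat wedge filtration of $\Z_K(C\underline{X},\underline{X})$ by a cone decomposition as in Theorem \ref{RZ-cone}.'' Your Step 1 is precisely such a cone decomposition: you posit attaching maps $\Phi_{K_I}\colon |K_I|\wedge\widehat{X}^I\to\Z_{K_I}^{|I|-1}(C\underline{X}_I,\underline{X}_I)$ producing $\Z_K^i$ from $\Z_K^{i-1}$ by cone attachment. For $\RZ_K$ this works because $CS^0=D^1$ and the whole thing has a cubical cell structure; for $\Z_K$ (all $X_i=S^1$) it also works because $CS^1=D^2$. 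But for a general pointed space $X_i$, the cone $CX_i$ has no disk structure, and there is no reason for $\Z_K^{i-1}\hookrightarrow\Z_K^i$ to be a relative CW attachment of the cells you describe. Your own final paragraph flags this as ``the hardest part''; it is not merely hard, it is the step the authors explicitly say is unavailable.

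A second problem is more structural: even in the moment-angle case $X_i=S^1$, where a cone decomposition does exist (Theorem~\ref{Z-cone}), the relationship between the attaching map $\widehat{\varphi}_{K_I}$ and the real attaching map $\varphi_{K_I}$ is \emph{not} $\varphi_{K_I}\wedge\mathrm{id}_{\widehat{X}^I}$. As Proposition~\ref{Z-RZ} and its proof (via Lemma~\ref{trivial-join}) show, $\widehat{\varphi}_{K_I}$ factors through the map
$(C|K_I|\times S^{|I|-1})\cup(|K_I|\times D^{|I|})\to(\RZ_{K_I}\times S^{|I|-1})\cup(\RZ_{K_I}^{|I|-1}\times D^{|I|})$,
a join-type construction whose target is a pushout of products, not the smash $\RZ_{K_I}^{|I|-1}\wedge\widehat{X}^I$. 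Passing from ``$\varphi_{K_I}$ null-homotopic'' to ``$\widehat{\varphi}_{K_I}$ null-homotopic'' already requires the nontrivial Lemma~\ref{trivial-join}; your proposed factorization $\varphi_{K_I}\wedge\mathrm{id}$ silently replaces this by something far simpler and in general false. The actual argument in \cite{IK} avoids a direct cone decomposition of $\Z_K(C\underline{X},\underline{X})$ altogether and instead compares the filtration of the polyhedral product to that of $\RZ_K$ through such pushout/half-smash constructions, combined with the BBCG-type splitting after one suspension; the hypothesis on $\varphi_{K_I}$ is what allows the desuspension. You should reorganize the argument around a map from $\Z_K(C\underline{X},\underline{X})$ to the target wedge (constructed for all $K$) and then prove it is an equivalence when the fat wedge filtration of $\RZ_K$ is trivial, rather than trying to build the source by cone attachments.
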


We next recall two criteria for triviality of the fat wedge filtration of $\RZ_K$. A minimal non-face of $K$ is a subset $\sigma\subset[m]$ such that $\sigma\not\in K$ but $\sigma-v\in K$ for any $v\in\sigma$. In particular, if $\sigma$ is a minimal non-face of $K$, then $K\cup\sigma$ is a simplicial complex.

\begin{proposition}
[cf. {\cite[Theorem 7.2]{IK}}]
\label{RZ-factor}
Let $\widehat{K}$ be a simplicial complex obtained from $K$ by adding all minimal non-faces of $K$. The map $\varphi_K\colon|K|\to\RZ_K^{m-1}$ factors through the inclusion $|K|\to|\widehat{K}|$.
\end{proposition}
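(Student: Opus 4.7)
The plan is to extend $\varphi_K$ across the disks corresponding to the minimal non-faces of $K$ one at a time. I enumerate the minimal non-faces of $K$ as $\sigma_1,\ldots,\sigma_r$ and set $K^{(j)}=K\cup\sigma_1\cup\cdots\cup\sigma_j$, so that $\widehat{K}=K^{(r)}$. Since each $\sigma_j$ is a minimal non-face, $|\partial\sigma_j|\subseteq|K|$, and $|K^{(j)}|$ is obtained from $|K^{(j-1)}|$ by attaching the disk $|\sigma_j|$ along the sphere $|\partial\sigma_j|\cong S^{|\sigma_j|-2}$. So I can inductively extend $\varphi_K$ to a map $|K^{(j)}|\to\RZ_K^{m-1}$ provided that $\varphi_K|_{|\partial\sigma_j|}$ is null-homotopic in $\RZ_K^{m-1}$; the restriction that must be null-homotoped at each stage depends only on $\varphi_K$, not on any previous extension, because $|\partial\sigma_j|\subseteq|K|$.

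In this way, the problem reduces to showing that for every minimal non-face $\sigma$ of $K$, the restriction $\varphi_K|_{|K_\sigma|}$ is null-homotopic in $\RZ_K^{m-1}$. The construction of $\varphi_K$ in \cite{IK} underlying Theorem \ref{RZ-cone} is natural with respect to full subcomplexes: for any $I\subseteq[m]$, the restriction $\varphi_K|_{|K_I|}$ agrees with $\varphi_{K_I}$ followed by the inclusion $\RZ_{K_I}^{|I|-1}\hookrightarrow\RZ_K^{m-1}$ obtained by placing basepoints in the coordinates outside $I$. Taking $I=\sigma$ and noting that $K_\sigma=\partial\sigma$, I obtain
$$\varphi_K|_{|K_\sigma|}\colon|K_\sigma|\xrightarrow{\varphi_{K_\sigma}}\RZ_{K_\sigma}^{|\sigma|-1}\hookrightarrow\RZ_K^{m-1}.$$

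Assume first that $\sigma\subsetneq[m]$. Any point in the image of $\RZ_{K_\sigma}\hookrightarrow\RZ_K$ has at least $m-|\sigma|\geq 1$ coordinates at the basepoint, so this inclusion factors through $\RZ_K^{m-1}$, and the previous composition factors further as
$$|K_\sigma|\xrightarrow{\varphi_{K_\sigma}}\RZ_{K_\sigma}^{|\sigma|-1}\hookrightarrow\RZ_{K_\sigma}\hookrightarrow\RZ_K^{m-1}.$$
Applying Theorem \ref{RZ-cone} to $K_\sigma$ itself, $\RZ_{K_\sigma}$ is obtained from $\RZ_{K_\sigma}^{|\sigma|-1}$ by attaching a cone along $\varphi_{K_\sigma}$, so the composition of the first two arrows is already null-homotopic, and hence so is the whole composition. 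In the remaining edge case $\sigma=[m]$, one necessarily has $K=\partial\Delta^{m-1}$, and a direct inspection shows that $\RZ_K^{m-1}$ is the union of the $m$ faces of $[-1,1]^m$ meeting at the basepoint $(1,\ldots,1)$, hence contractible; so $\varphi_K$ is trivially null-homotopic.

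I expect the main obstacle to be the naturality claim used above. It is not articulated as a separate statement in the excerpt but is essentially built into the construction of $\varphi_K$ in \cite{IK}, so verifying it cleanly amounts to unpacking that construction; once it is in place, the rest of the argument is just the cone-attachment trick applied to the subcomplex $K_\sigma$.
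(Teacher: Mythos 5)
Your reduction is sound: enumerating the minimal non-faces and extending one simplex at a time is legitimate, and since $\partial\sigma_j\subset K$ the extension over $|\sigma_j|$ only requires $\varphi_K|_{|\partial\sigma_j|}$ to be null-homotopic in $\RZ_K^{m-1}$. The edge case $\sigma=[m]$, forcing $K=\partial\Delta^{m-1}$, is also handled correctly: $\RZ_K^{m-1}$ is the union of the $m$ facets of $[-1,1]^m$ through the basepoint corner $(1,\ldots,1)$, which is star-shaped and hence contractible.

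The gap is in the naturality claim. It is \emph{not} true that $\varphi_K|_{|K_I|}$ equals $\varphi_{K_I}$ followed by the basepoint inclusion $\RZ_{K_I}^{|I|-1}\hookrightarrow\RZ_K^{m-1}$. To see this concretely, take $m=2$ and $K$ two disjoint vertices, so $\RZ_K=\partial[-1,1]^2\cong S^1$ and $\RZ_K^1$ is the two edges meeting at $(1,1)$. The cone on $|K|$ appearing in Theorem~\ref{RZ-cone} is the closure of the complement $\RZ_K\setminus\RZ_K^1$, with cone point $(-1,-1)$, and its boundary map $\varphi_K$ sends vertex $1$ to $(1,-1)$ and vertex $2$ to $(-1,1)$. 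But $\varphi_{K_{\{1\}}}$ composed with the basepoint inclusion sends vertex $1$ to $(1,1)$. So the maps disagree. The correct naturality statement for the construction in \cite{IK} places the \emph{non-basepoint} (i.e.\ the point $-1$ of $S^0$) in the coordinates outside $I$: with that embedding, the restriction of $\varphi_K$ to $|K_I|$ does coincide with $\varphi_{K_I}$ followed by the inclusion. This is exactly what one sees in the example, where $(1,-1)$ lies in the anti-basepoint copy of $\RZ_{K_{\{1\}}}$.

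This matters because your subsequent argument relies on the basepoint embedding. With the anti-basepoint embedding, the inclusion $\RZ_{K_\sigma}\hookrightarrow\RZ_K$ does \emph{not} factor through $\RZ_K^{m-1}$ (the cone point $(-1,\ldots,-1)$ has no coordinate at the basepoint), so the null-homotopy you extract from Theorem~\ref{RZ-cone} does not stay inside $\RZ_K^{m-1}$. The fix is easier than the route you chose: for a minimal non-face $\sigma$, one has $K_\sigma=\partial\Delta^\sigma$, so $\RZ_{K_\sigma}^{|\sigma|-1}$ is the union of the $|\sigma|$ facets of $[-1,1]^\sigma$ through the basepoint corner, which is contractible. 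Hence $\varphi_{K_\sigma}\colon|K_\sigma|\to\RZ_{K_\sigma}^{|\sigma|-1}$ is null-homotopic \emph{within} $\RZ_{K_\sigma}^{|\sigma|-1}$, and pushing this null-homotopy forward along the anti-basepoint embedding into $\RZ_K^{m-1}$ gives what you need. With the naturality fixed and this replacement for the last step, your reduction then yields a correct proof.
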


$K$ is called $k$-neighborly if any subsets with $k+1$ elements of $[m]$ is a simplex of $K$, or alternatively, the $k$-skeleton of $\Delta^{[m]}$ is included in $K$, where $\Delta^{[m]}$ denotes the full simplex on the vertex set $[m]$.

\begin{theorem}
[{\cite[Theorem 1.6]{IK}}]
\label{neighborly}
If $K$ is $\lceil\frac{\dim K}{2}\rceil$-neighborly, the fat wedge filtration of $\RZ_K$ is trivial.
\end{theorem}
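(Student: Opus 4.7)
Triviality of the fat wedge filtration of $\RZ_K$ requires that the map $\varphi_{K_I}:|K_I|\to\RZ_{K_I}^{|I|-1}$ be null-homotopic for every nonempty $I\subset[m]$. I would first reduce to the case $I=[m]$: any full subcomplex $K_I$ of a $k$-neighborly $K$ is again $k$-neighborly on the vertex set $I$, and $\dim K_I\le\dim K$ gives $\lceil\dim K_I/2\rceil\le\lceil\dim K/2\rceil=k$, so the hypothesis passes to $K_I$. It thus suffices to show that $\varphi_K:|K|\to\RZ_K^{m-1}$ is null-homotopic.

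Next I would record the key combinatorial observation that $k$-neighborliness of $K$ implies $(k+1)$-neighborliness of $\widehat{K}$. For any $\sigma\subset[m]$ with $|\sigma|=k+2$, every proper subset of $\sigma$ has size at most $k+1$ and hence lies in $K$ by hypothesis; so either $\sigma\in K$ already, or $\sigma$ is a minimal non-face of $K$ and hence lies in $\widehat{K}$. In either case the $(k+1)$-skeleton of $\Delta^{[m]}$ is contained in $\widehat{K}$, and in particular $|\widehat{K}|$ is $k$-connected (as it is obtained from the $k$-connected $(k+1)$-skeleton of $\Delta^{[m]}$ by attaching cells of dimension at least $k+2$).

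By Proposition \ref{RZ-factor} the map $\varphi_K$ factors as $|K|\xrightarrow{\iota}|\widehat{K}|\xrightarrow{\psi}\RZ_K^{m-1}$. I would then iterate the filling operation $K\subsetneq\widehat{K}\subsetneq\widehat{\widehat{K}}\subsetneq\cdots$, which strictly enlarges the complex at each stage and raises the neighborliness by at least one, until after finitely many steps one reaches $\Delta^{[m]}$, whose realization is contractible. The aim is to produce, at each stage, a compatible extension of the previous extension, so that $\varphi_K$ ultimately factors through a contractible space and is null-homotopic.

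The principal obstacle is that the extension $\psi$ supplied by Proposition \ref{RZ-factor} is \emph{not} itself a $\varphi$-map, so the proposition cannot simply be reapplied at the next stage. I would handle this by an obstruction-theoretic argument: the $k$-connectivity of $|\widehat{K}|$ combined with the bound $\dim K\le 2k$ (which is exactly what the hypothesis $k=\lceil\dim K/2\rceil$ provides) places all primary obstructions to extending $\psi$ over the higher cells of $\widehat{\widehat{K}}$ in degrees between $k+1$ and $2k$, and one must kill them using the further connectivity gained at each subsequent filling step. Making this matching of dimension against connectivity precise, and checking compatibility of the constructed extensions throughout the iteration, is the main technical hurdle; the numerical condition $k=\lceil\dim K/2\rceil$ is exactly what makes these bookkeeping inequalities balance.
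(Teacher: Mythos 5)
The paper does not give a proof of Theorem \ref{neighborly}; it is quoted verbatim from \cite[Theorem 1.6]{IK}, so there is no internal argument to compare against. On its own merits, your proposal contains several correct observations but a genuine gap at the crucial step.

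What is correct: the reduction to $I=[m]$ (full subcomplexes of a $k$-neighborly complex are $k$-neighborly and have no larger dimension, so the hypothesis is inherited); the observation that if $K$ is $k$-neighborly then $\widehat{K}$ is $(k+1)$-neighborly, hence $|\widehat{K}|$ is $k$-connected; the use of Proposition \ref{RZ-factor} to factor $\varphi_K$ as $|K|\xrightarrow{\iota}|\widehat{K}|\xrightarrow{\psi}\RZ_K^{m-1}$; and the fact that iterating $\widehat{(\cdot)}$ strictly enlarges the complex and terminates at $\Delta^{[m]}$.

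The gap is in the final paragraph. You propose to extend $\psi$ successively over $\widehat{\widehat{K}},\widehat{\widehat{\widehat{K}}},\ldots,\Delta^{[m]}$ and to control the extension problem by obstruction theory, invoking ``the further connectivity gained at each subsequent filling step'' to kill obstructions. This cannot work as stated. The obstruction to extending $\psi$ over a new cell $\sigma$ of dimension $j$ is the class $[\psi|_{\partial\sigma}]\in\pi_{j-1}(\RZ_K^{m-1})$; it lives in the homotopy groups of the \emph{target}, about whose connectivity nothing is known (in general $\RZ_K^{m-1}$ is not simply connected, let alone highly connected). Increasing the connectivity of the \emph{source} $|\widehat{K}^{(j)}|$ has no bearing on whether these classes vanish, so the proposed ``bookkeeping'' does not balance anything. (Also a small miscount: the new cells of $\widehat{\widehat{K}}$ can have dimension up to $\dim K+2\le 2k+2$, so the obstruction degrees are not confined to $[k+1,2k]$.) Note moreover that your plan, if carried out, would show that $\psi$ itself is null-homotopic, which is strictly stronger than $\psi\circ\iota\simeq *$ and is not what Proposition \ref{RZ-factor} is designed to give.

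It is also worth flagging why the more naive route of showing $\iota:|K|\to|\widehat{K}|$ null does not work either, since it explains why a genuinely new idea is needed. Take $K$ the $7$-vertex triangulation of the torus (so $d=2$, $k=1$). Then $\widehat{K}$ is the full $2$-skeleton of $\Delta^{[7]}$, which has no $3$-cells, so the fundamental cycle of $T^2$ is a nonzero class in $H_2(|\widehat{K}|)$; hence $\iota$ is \emph{not} null-homotopic, even though $\varphi_K$ is. The actual proof in \cite{IK} therefore must exploit finer structure of the map $\varphi_K$ (and of the factoring map $\psi$) beyond what Proposition \ref{RZ-factor} and the connectivity of $\widehat{K}$ record; the dimension-versus-connectivity count alone, in the form of Lemma \ref{null-homotopic} with $L=\widehat{K}$, gives $\mathrm{conn}\,\widehat{K}\ge k$ but only $\mathrm{hodim}\,K\le 2k$, which is insufficient.
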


In general we cannot describe the fat wedge filtration of $\Z_K(C\underline{X},\underline{X})$ by a cone decomposition as in Theorem \ref{RZ-cone}. But as for the moment-angle complex $\Z_K$ we have:

\begin{theorem}
[{\cite[Theorem 5.1]{IK}}]
\label{Z-cone}
There is a map $\widehat{\varphi}_{K_I}\colon|K_I|*S^{|I|-1}\to\Z_{K_I}^{|I|-1}$ for each $\emptyset\ne I\subset[m]$ such that $\Z_K^i$ is $\Z_K^{i-1}$ attached cones by the maps $\widehat{\varphi}_{K_I}\colon|K_I|*S^{i-1}\to\Z_{K_I}^{i-1}\subset\Z_K^{i-1}$ for all $I\subset[m]$ with $|I|=i$.
\end{theorem}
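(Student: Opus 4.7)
The plan is to mimic the proof of the real analogue (Theorem \ref{RZ-cone}), accounting for the extra $S^1$-directions now present in each coordinate. The key observation is that stereographic projection of $D^2$ from the basepoint $1\in S^1\subset D^2$ identifies the pair $(D^2\setminus\{1\},\,S^1\setminus\{1\})$ with $(\mathbb{R}\times[0,\infty),\,\mathbb{R}\times\{0\})$. Coordinate-wise this adds a free $\mathbb{R}^I$-factor on top of the real analysis, and one-point compactifying that $\mathbb{R}^i$ into $S^i$ is precisely what produces the extra $S^{i-1}$ appearing in the join $|K_I|*S^{i-1}$.

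First I would reduce to a single $I$: points of $\Z_K^i\setminus\Z_K^{i-1}$ have exactly $i$ non-basepoint coordinates, so the open subspaces $\Z_{K_I}\setminus\Z_{K_I}^{i-1}$ for distinct $i$-element $I\subset[m]$ are disjoint, and it suffices to show that for each such $I$ the pair $(\Z_{K_I},\Z_{K_I}^{i-1})$ is the mapping cone of a map $\widehat{\varphi}_{K_I}\colon|K_I|*S^{i-1}\to\Z_{K_I}^{i-1}$. Fixing $I$, I would study $N:=\Z_{K_I}\setminus\Z_{K_I}^{i-1}$, the set of points of $\Z_{K_I}\subset(D^2)^I$ with no coordinate equal to $1$. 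Stratifying $N$ by which coordinates lie in the interior $D^2\setminus S^1$ and applying the stereographic identification above coordinate-wise yields a homeomorphism
$$N\;\cong\;\mathbb{R}^I\times O_I,\qquad O_I:=\{y\in[0,\infty)^I\mid\{k\in I:y_k>0\}\in K_I\},$$
where $O_I$ is exactly the non-basepoint locus analyzed in the proof of Theorem \ref{RZ-cone}. That proof identifies $O_I^+\simeq\Sigma|K_I|$ with attaching map $\varphi_{K_I}$, so passing to one-point compactifications gives
$$\Z_{K_I}/\Z_{K_I}^{i-1}\;\simeq\;N^+\;\simeq\;S^i\wedge\Sigma|K_I|\;\simeq\;|K_I|*S^i\;\simeq\;\Sigma(|K_I|*S^{i-1}),$$
the correct cofiber for a cone attachment via $|K_I|*S^{i-1}$.

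To construct $\widehat{\varphi}_{K_I}$ itself, I would parametrize the join $|K_I|*S^{i-1}$ as $|K_I|\times S^{i-1}\times[0,1]/{\sim}$ and define $\widehat{\varphi}_{K_I}$ to send the $|K_I|$-end ($t=0$) into $\Z_{K_I}^{i-1}$ via $\varphi_{K_I}\colon|K_I|\to\RZ_{K_I}^{i-1}\hookrightarrow\Z_{K_I}^{i-1}$, and the $S^{i-1}$-end ($t=1$) via the attaching map $S^{i-1}\to\Z_{K_I}^{i-1}$ of the top cell of the torus $T^I\subset\Z_{K_I}$ (i.e., the iterated Whitehead product of the coordinate circle inclusions). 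The interior is filled in using the stereographic coordinates above, so that the $[0,1]$-parameter of the join aligns with the radial coordinates on the $D^2\setminus\{1\}$-factors.

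The main obstacle is the last verification: that this explicit $\widehat{\varphi}_{K_I}$ actually has $(\Z_{K_I},\Z_{K_I}^{i-1})$ as its mapping cone rather than merely a space with the same cofiber. This reduces to a stratum-by-stratum check that, under the homeomorphism $N\cong\mathbb{R}^I\times O_I$, the evident radial deformation retraction of $N$ interpolates correctly between the real attaching data on one end of the join and the torus Whitehead data on the other. Once this local statement is established for each $I$ with $|I|=i$, the disjointness reduction at the beginning assembles the individual cone attachments into the required description of $\Z_K^i$ relative to $\Z_K^{i-1}$.
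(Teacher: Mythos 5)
Your reduction to a single $I$ and the computation of the quotient $\Z_{K_I}/\Z_{K_I}^{i-1}\cong N^+\cong S^i\wedge O_I^+\cong\Sigma(|K_I|*S^{i-1})$ via stereographic coordinates and one-point compactification are correct, and they are a clean way to see that the cofiber has the right homotopy type. But this only shows that the pair $(\Z_{K_I},\Z_{K_I}^{i-1})$ has the cofiber of a putative cone attachment; it does not by itself produce the map $\widehat{\varphi}_{K_I}$ nor show that $\Z_{K_I}$ is \emph{homeomorphic} to $\Z_{K_I}^{i-1}\cup_{\widehat{\varphi}_{K_I}}C(|K_I|*S^{i-1})$. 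You recognize this yourself and call it ``the main obstacle,'' but your sketch of how to close it is where the proposal genuinely falls short. The proposed $\widehat{\varphi}_{K_I}$ is only prescribed on the two ends of the join (the $|K_I|$-end via $\varphi_{K_I}$, the $S^{i-1}$-end via the top-cell attaching map of $T^I$), and the phrase ``the interior is filled in using the stereographic coordinates'' does not actually define a map: those coordinates parametrize $N=\Z_{K_I}\setminus\Z_{K_I}^{i-1}$, whereas the interior of the join must be sent into $\Z_{K_I}^{i-1}$. A ``stratum-by-stratum check'' would have to both well-define the map and verify the pushout property, and no hint is given that this can be made to work.

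The cited source \cite{IK} proves the statement by a structural argument that sidesteps all of this, and the present paper's proof of Proposition \ref{Z-RZ} reveals the mechanism. Using $D^2\cong D^1\times D^1$ one obtains a homeomorphism exhibiting $\Z_K$ as $\Z_K^{m-1}$ with $\RZ_K\times D^m$ glued on along $(\RZ_K\times S^{m-1})\cup(\RZ_K^{m-1}\times D^m)$. Substituting the real cone decomposition $\RZ_K=\RZ_K^{m-1}\cup_{\varphi_K}C|K|$ from Theorem \ref{RZ-cone} gives
$\RZ_K\times D^m=(\RZ_K^{m-1}\times D^m)\cup(C|K|\times D^m)$, and since $C|K|\times D^m\cong C(|K|*S^{m-1})$ with boundary $(C|K|\times S^{m-1})\cup(|K|\times D^m)=|K|*S^{m-1}$, the cone attachment along $\widehat{\varphi}_K$ is produced \emph{automatically}, with $\widehat{\varphi}_K$ being the composite displayed in the proof of Proposition \ref{Z-RZ}. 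So the route in \cite{IK} converts a homeomorphism of pairs into the desired cone structure in one step, whereas your route stops at the level of cofibers and defers the essential gluing verification. I would recommend replacing the explicit attaching-map construction by this pushout argument, or at least proving the homeomorphism $\Z_K\cong\Z_K^{m-1}\cup(\RZ_K\times D^m)$ first; as written, the proposal has a real gap at the very step that distinguishes ``same cofiber'' from ``mapping cone.''
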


We say that the fat wedge filtration of $\Z_K$ is trivial if $\widehat{\varphi}_{K_I}$ is null homotopic for any $\emptyset\ne I\subset[m]$. Then in particular, if the fat wedge filtration of $\Z_K$ is trivial, $\Z_K$ is a co-H-space, implying $K$ is Golod. For a later use, we directly connect triviality of $\varphi_K$ to that of $\widehat{\varphi}_K$.

\begin{lemma}
\label{trivial-join}
If a cofibration $f\colon A\to X$ is null homotopic, so is the induced map
$$A*S^{m-1}=(CA\times S^{m-1})\cup(A\times D^m)\to((X\cup_fCA)\times S^{m-1})\cup(X\times D^m).$$
\end{lemma}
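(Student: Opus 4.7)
The plan is to use the null-homotopy of $f$ to extend $\phi$ over a contractible space, thereby exhibiting $\phi$ as null-homotopic. Write $Y$ for the target of $\phi$. First I would fix a map $\bar{f}\colon CA\to X$ extending $f$ along $A\hookrightarrow CA$; this exists since $f$ is null-homotopic. The inclusion $A\hookrightarrow CA$ induces an inclusion $A*S^{m-1}\hookrightarrow CA*S^{m-1}$, and since $CA$ is contractible, so is $CA*S^{m-1}$. Hence it suffices to construct an extension $\widetilde{\Phi}\colon CA*S^{m-1}\to Y$ of $\phi$ along this inclusion, since then the composite of a null-homotopy of $\mathrm{id}_{CA*S^{m-1}}$ with $\widetilde{\Phi}$ restricts to a null-homotopy of $\phi$.

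To construct $\widetilde{\Phi}$, I would use the join decomposition $CA*S^{m-1}=(CA\times D^m)\cup_{CA\times S^{m-1}}(CCA\times S^{m-1})$. On $CA\times D^m$ set $\widetilde{\Phi}([a,t],d):=(\bar{f}(a,t),d)\in X\times D^m\subset Y$; this restricts to $\phi$ on $A\times D^m$ since $\bar{f}|_A=f$. On $CCA\times S^{m-1}$ define $\widetilde{\Phi}$ by extending the cone inclusion $\iota\colon CA\to X\cup_f CA$ over the outer cone $CCA$ via the contraction $[[a,t]_C,r]\mapsto[a,\min(t+r,1)]_{X\cup_f CA}$, crossed with $\mathrm{id}_{S^{m-1}}$; this restricts to $\phi$ on the subspace $CA\times S^{m-1}\subset CCA\times S^{m-1}$ (taking $r=0$).

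The main obstacle is compatibility on the gluing locus $CA\times S^{m-1}$: the first prescription gives $(\bar{f}(a,t),\sigma)\in X\times S^{m-1}$ while the second gives $([a,t]_{X\cup_f CA},\sigma)\in(X\cup_f CA)\times S^{m-1}$, and these two points of $Y$ are distinct when $t>0$. Both, however, lie in the fiber $(X\cup_f CA)\times\{\sigma\}\subset Y$ and are joined by the canonical path obtained by concatenating $u\mapsto(\bar{f}(a,t(1-2u)),\sigma)$ for $u\in[0,1/2]$ (traveling through $X$) with $u\mapsto([a,t(2u-1)]_{X\cup_f CA},\sigma)$ for $u\in[1/2,1]$ (traveling through the attached cone), the two pieces meeting at $(f(a),\sigma)$. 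Crucially, this path is constant in $u$ when $t=0$, so it agrees with $\phi$ on $A\times S^{m-1}$. I would then choose a collar neighborhood of the intersection $CA\times S^{m-1}$ inside $CA*S^{m-1}$ and interpolate between the two prescriptions along this canonical path, obtaining a continuous extension $\widetilde{\Phi}$ and hence the desired null-homotopy of $\phi$.
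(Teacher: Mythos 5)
There is a genuine gap in the interpolation step, and it is not a matter of filling in details: with the two prescriptions as written, no continuous extension exists over the gluing locus.

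Denote by $Y=((X\cup_fCA)\times S^{m-1})\cup_{X\times S^{m-1}}(X\times D^m)$ the target. The key structural fact about $Y$ is that the fibre over an \emph{interior} point of $D^m$ is only $X$, while over a point of $S^{m-1}=\partial D^m$ it is all of $X\cup_fCA$. Your two prescriptions on the gluing locus $CA\times S^{m-1}$ give $(\bar f([a,t]),\sigma)\in X\times S^{m-1}$ from the $CA\times D^m$ side and $([a,t],\sigma)\in(X\cup_fCA)\times S^{m-1}$ from the $CCA\times S^{m-1}$ side, and the ``canonical path'' you propose between them passes through points $([a,t'],\sigma)$ with $t'>0$, i.e.\ through $CA\setminus X$. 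If you insert this path in a collar lying inside $CA\times D^m$, the intermediate collar points sit over $\mathrm{int}(D^m)$, where the only available fibre of $Y$ is $X$; so the interpolated values simply leave $Y$, and no continuous map results. If instead you insert it in a collar inside $CCA\times S^{m-1}$, the constraint coming from $\phi$ along $T=0$ (which, note, is where the image of the piece $CA\times S^{m-1}\subset A*S^{m-1}$ actually sits — \emph{not} at $r=0$ as your write-up claims) forces the value $([a,r],\sigma)$, while your interpolation is constant in $r$ at $T=0$; again a mismatch.

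The obstruction can be stated intrinsically. Your two prescriptions restricted to the two cone slices $\{T=0\}\cup\{r=0\}\cong\Sigma A$ inside $CCA$ assemble into the map $g_0\colon\Sigma A\to X\cup_fCA$ given by $\iota$ on one cone and $\bar f$ on the other. Extending over $CCA\times S^{m-1}$ compatibly requires $g_0$ to be null homotopic. But composing $g_0$ with the collapse $X\cup_fCA\to(X\cup_fCA)/X\cong\Sigma A$ sends the $\bar f$-cone to the basepoint and the $\iota$-cone by the quotient $CA\to CA/A$, producing the ``collapse one cone'' map $\Sigma A\to\Sigma A$, which is a homotopy equivalence. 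Hence $g_0$ is non-null whenever $\Sigma A$ is non-contractible (already for $A=S^0$), and the extension you want does not exist with these boundary values. Since $CA*S^{m-1}\supset A*S^{m-1}$ is a cofibration with contractible total space, extending over $CA*S^{m-1}$ is \emph{equivalent} to the null homotopy you are trying to prove, so one cannot write down an extension piecewise without genuinely using the hypotheses; your argument does not at any point make real use of $f$ being a cofibration.

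The paper's proof sidesteps this exactly by invoking the cofibration hypothesis first: since $f$ is a cofibration, $X\cup_fCA\to X/A$ is a homotopy equivalence, hence so is $Y\to Y':=(X/A\times S^{m-1})\cup(X\times D^m)$. After this replacement the $CA\times S^{m-1}$ part of the map becomes \emph{constant} at the basepoint, so the composite $A*S^{m-1}\to Y'$ is, up to homotopy rel the overlap, the restriction of $\bar f\times\mathrm{id}\colon CA\times D^m\to X\times D^m\subset Y'$ to the boundary $A*S^{m-1}=\partial(CA\times D^m)$; since $CA\times D^m=C(A*S^{m-1})$, this is visibly null. The crucial step you are missing is the reduction $X\cup_fCA\rightsquigarrow X/A$, which destroys the very obstruction $g_0$ described above.
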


\begin{proof}
Since $f$ is a cofibration, the quotient map $X\cup_fCA\to X/A$ is a homotopy equivalence, so is also the map
$$((X\cup_fCA)\times S^{m-1})\cup(X\times D^m)\to(X/A\times S^{m-1})\cup(X\times D^m)$$ 
since these spaces are compatible homotopy puchouts of homotopy equivalent spaces, where the target is defined by the quotient map $q\colon X\to X/A$. By the null homotopy for $f$, we can shrink the composite
$$A*S^{m-1}\to((X\cup_fCA)\times S^{m-1})\cup(X\times D^m)\to(X/A\times S^{m-1})\cup(X\times D^m)$$
into $*\times D^m$, so it is null homotopic. Then since the second arrow is a homotopy equivalence, the proof is completed.
\end{proof}

\begin{proposition}
\label{Z-RZ}
$\widehat{\varphi}_K$ is null homotopic whenever so is $\varphi_K$.
\end{proposition}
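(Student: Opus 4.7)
The plan is to set up Lemma~\ref{trivial-join} with $f = \varphi_K \colon |K| \to \RZ_K^{m-1}$. By Theorem~\ref{RZ-cone} applied with $i = m$, the pushout $\RZ_K^{m-1} \cup_{\varphi_K} C|K|$ is exactly $\RZ_K$. After replacing $\varphi_K$ by a cofibration via the standard mapping cylinder construction (which does not change the homotopy type of any relevant spaces), the hypothesis that $\varphi_K$ is null homotopic together with Lemma~\ref{trivial-join} gives that the induced map
$$|K| * S^{m-1} \longrightarrow (\RZ_K \times S^{m-1}) \cup (\RZ_K^{m-1} \times D^m)$$
is null homotopic.

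To finish, I need to compare this induced map with $\widehat{\varphi}_K \colon |K| * S^{m-1} \to \Z_K^{m-1}$. The key observation is that the construction of $\widehat{\varphi}_K$ given in \cite[Theorem 5.1]{IK} fits exactly into this pushout framework. Under the product decomposition $(D^2, S^1) \cong (D^1 \times D^1, \partial(D^1 \times D^1))$ of each coordinate pair used to build $\Z_K$, the first $D^1$ factor contributes the $\RZ_K$-directions while the second $D^1$ factor contributes the $S^{m-1}$ and $D^m$ directions, giving a natural map
$$(\RZ_K \times S^{m-1}) \cup (\RZ_K^{m-1} \times D^m) \longrightarrow \Z_K^{m-1}$$
whose composite with the induced map above equals $\widehat{\varphi}_K$. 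Post-composing the null homotopy produced by Lemma~\ref{trivial-join} with this natural map then yields a null homotopy of $\widehat{\varphi}_K$, as required.

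The main obstacle is verifying the factorization of $\widehat{\varphi}_K$ through the pushout: one must unpack the assembly of $\widehat{\varphi}_K$ in \cite[Theorem 5.1]{IK} and check that it really agrees with the composite described above, rather than being only some map with the same source and target that happens to exist. Once this geometric identification is in hand, the argument closes immediately by Lemma~\ref{trivial-join}.
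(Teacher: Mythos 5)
Your proposal takes essentially the same route as the paper: both apply Lemma~\ref{trivial-join} with $f=\varphi_K$ and then invoke the factorization of $\widehat{\varphi}_K$ through the map $|K|*S^{m-1}\to(\RZ_K\times S^{m-1})\cup(\RZ_K^{m-1}\times D^m)$ coming from the construction in \cite{IK}. The "main obstacle" you flag is precisely the step the paper discharges by citing that construction, so your argument and the paper's coincide.
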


\begin{proof}
By the construction in \cite{IK}, $\widehat{\varphi}_K$ factors through the map
$$|K|*S^{m-1}=(C|K|\times S^{m-1})\cup(|K|\times D^m)\to(\RZ_K\times S^{m-1})\cup(\RZ_K^{m-1}\times D^m),$$
where the second arrow is the restriction of the map $C|K|\times D^m\to\RZ_K\times D^m=(\RZ_K^{m-1}\cup_{\varphi_K}C|K|)\times D^m$. Thus the proof is done by Lemma \ref{trivial-join}.
\end{proof}


\section{Proof of Theorem \ref{main1} and \ref{main2}}

We first prove Theorem \ref{main1} by starting with a well-known description of Golodness.

\begin{proposition}
[Hochster \cite{H}]
\label{Hochster}
If $K$ is Golod over $\Bbbk$, then the inclusion $K_{I\cup J}\to K_I*K_J$ is trivial in cohomology with $\Bbbk$ coefficient for any $\emptyset\ne I,J\subset[m]$ with $I\cap J=\emptyset$.
\end{proposition}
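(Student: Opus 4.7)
The plan is to translate Golodness into a statement about the cohomology ring of the moment-angle complex $\Z_K$ and then apply the Hochster--Baskakov description of that ring.

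First, by the ring isomorphism \eqref{Z-Tor}, Golodness of $K$ over $\Bbbk$ implies in particular that all cup products of positive-degree classes in $H^*(\Z_K;\Bbbk)$ vanish. So the task reduces to identifying those cup products in terms of full subcomplexes of $K$.

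Next, I would invoke the Hochster--Baskakov decomposition
\[
\tilde{H}^n(\Z_K;\Bbbk)\;\cong\;\bigoplus_{\emptyset\ne I\subset[m]}\tilde{H}^{n-|I|-1}(K_I;\Bbbk),
\]
whose additive form is Hochster's formula and whose multiplicative refinement is due to Baskakov. For nonempty disjoint $I,J\subset[m]$, the cup product restricted to the $I$- and $J$-summands factors as the composition
\[
\tilde{H}^{p}(K_I;\Bbbk)\otimes\tilde{H}^{q}(K_J;\Bbbk)\;\xrightarrow{\,\mu\,}\;\tilde{H}^{p+q+1}(K_I*K_J;\Bbbk)\;\xrightarrow{\,\iota^*\,}\;\tilde{H}^{p+q+1}(K_{I\cup J};\Bbbk),
\]
where $\mu$ is the reduced cross product for the simplicial join and $\iota\colon K_{I\cup J}\hookrightarrow K_I*K_J$ is the inclusion of the full subcomplex on $I\cup J$; for $I\cap J\ne\emptyset$ the corresponding product already vanishes. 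Vanishing of all cup products in $H^+(\Z_K;\Bbbk)$ therefore forces $\iota^*\circ\mu=0$ for all choices of $I,J,p,q$ and classes.

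To conclude, working with field coefficients $\Bbbk$ one has the K\"unneth formula for the join, which makes $\mu$ an isomorphism onto $\tilde{H}^{p+q+1}(K_I*K_J;\Bbbk)$; consequently $\iota^*=0$, which is the asserted triviality of the inclusion in cohomology.

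The main obstacle is the multiplicative refinement of Hochster's formula used in the second step: the additive formula is classical, but tracking the cup product through the decomposition requires a careful cellular (or algebraic) chain-level calculation. I would invoke Baskakov's identification from the literature rather than reprove it. A secondary subtlety concerns coefficient rings that are not fields, where $\mu$ can fail to be surjective because of the $\mathrm{Tor}$ summand in the K\"unneth sequence for joins; this is why the cleanest formulation, and the only one needed in the applications below, is with field coefficients.
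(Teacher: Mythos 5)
The paper supplies no proof of this proposition; it simply cites Hochster, so there is no in-paper argument to compare against and any correct derivation is a legitimate addition. Your route is the standard topological one, and it is sound over a field: by the ring isomorphism \eqref{Z-Tor}, Golodness of $K$ over $\Bbbk$ kills all cup products of positive-degree classes in $H^*(\Z_K;\Bbbk)$; Baskakov's multiplicative refinement of Hochster's additive decomposition identifies the product on the $(I,J)$-bigraded piece (disjoint nonempty $I,J$) with the composite $\iota^*\circ\mu$ of the join cross product and the restriction along $K_{I\cup J}\hookrightarrow K_I*K_J$, and vanishes when $I\cap J\neq\emptyset$; and the field K\"unneth isomorphism for the join makes $\mu$ an isomorphism, so the vanishing of all those composites forces $\iota^*=0$.

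The one point to be precise about is the coefficient hypothesis, which you yourself flag at the end. The last step genuinely uses that $\mu$ is surjective, which holds when $\Bbbk$ is a field but can fail in general because of the $\mathrm{Tor}$ term in the K\"unneth sequence for the join. As stated, the proposition allows an arbitrary commutative ring $\Bbbk$, so your proof establishes it only over fields. That restriction is harmless for every use in this paper (Theorem \ref{main1} invokes it over $\mathbb{Z}/2$, and Theorem \ref{main2} over $\mathbb{Z}/p$), and it is the cleanest context for the topological argument; but if one wants the proposition in the generality stated, one should either note the field hypothesis explicitly or fall back on Hochster's direct algebraic computation on the Koszul complex, which does not pass through the K\"unneth surjectivity of $\mu$.
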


We next consider the following easy lemma which immediately follows from Proposition \ref{RZ-factor}. Let $\mathrm{hodim}\,K$ and $\mathrm{conn}\,K$ denote the homotopy dimension and the connectivity of $K$, respectively.

\begin{lemma}
\label{null-homotopic}
If there is a simplicial complex $L$ satisfying $K\subset L\subset\widehat{K}$ and $\mathrm{hodim}\,K\le\mathrm{conn}\,L$, then $\varphi_K$ is null homotopic.
\end{lemma}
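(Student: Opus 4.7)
The plan is to combine Proposition \ref{RZ-factor} with the elementary fact that a map from a CW complex of dimension $n$ into an $n$-connected space is null homotopic. First I would invoke Proposition \ref{RZ-factor} to factor $\varphi_K\colon|K|\to\RZ_K^{m-1}$ through the inclusion $|K|\hookrightarrow|\widehat{K}|$. Since by hypothesis $K\subset L\subset\widehat{K}$, the inclusion $|K|\hookrightarrow|\widehat{K}|$ itself factors as
\[
|K|\hookrightarrow|L|\hookrightarrow|\widehat{K}|,
\]
so it suffices to show that the inclusion $|K|\hookrightarrow|L|$ is null homotopic.

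For this remaining step, I would replace $|K|$ up to homotopy by a CW complex of dimension $\mathrm{hodim}\,K$ and argue by a standard cellular induction: since $\mathrm{hodim}\,K\le\mathrm{conn}\,L$, the successive obstructions to constructing a null homotopy of the inclusion into $|L|$ live in groups $\pi_i(|L|)$ with $i\le\mathrm{conn}\,L$, all of which vanish. There is essentially no obstacle here; the lemma is phrased precisely so that Proposition \ref{RZ-factor} permits an intermediate complex $L$ to be interposed between $K$ and $\widehat{K}$, after which the connectivity hypothesis disposes of the inclusion and hence of $\varphi_K$.
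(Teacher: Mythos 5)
Your proposal is correct and is essentially the argument the paper has in mind: the paper states only that the lemma "immediately follows from Proposition \ref{RZ-factor}," and the intended reasoning is precisely to factor $\varphi_K$ through $|K|\hookrightarrow|L|\hookrightarrow|\widehat{K}|$ and then kill the inclusion $|K|\hookrightarrow|L|$ using $\mathrm{hodim}\,K\le\mathrm{conn}\,L$.
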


We specialize to the case $\mathrm{hodim}\,K\le 1$ which is essentially the same as the proof of the implication (2) $\Rightarrow$ (3) of Theorem \ref{main1} in \cite{IK}.

\begin{proposition}
\label{hodim<2}
If the 1-skeleton of $K$ is chordal and $\mathrm{hodim}\,K\le 1$, then $\varphi_K$ is null homotopic.
\end{proposition}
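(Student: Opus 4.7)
The plan is to apply Lemma \ref{null-homotopic}, which reduces the problem to constructing a simplicial complex $L$ with $K\subset L\subset\widehat{K}$ and $\mathrm{conn}\,L\ge 1$ (i.e.\ $L$ connected and simply connected).

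I will exploit two kinds of minimal non-faces: since $K$ has $\mathrm{hodim}\le 1$, a non-edge $\{i,j\}$ of $K$ is automatically a minimal non-face, and an ``empty triangle'' $\{i,j,k\}$ of $K$ (three vertices with all three pairs edges of $K$ but $\{i,j,k\}\notin K$) is a minimal non-face of size $3$. Thus I may freely fill empty triangles of $K$ and adjoin edges between non-adjacent pairs without leaving $\widehat{K}$.

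Define $L_0\subset\widehat{K}$ by filling in every empty triangle of $K$; this leaves the $1$-skeleton unchanged. Chordality enters here: any cycle of length $\ge 4$ in the $1$-skeleton admits a chord, so an easy induction on cycle length decomposes any such cycle into triangles of the $1$-skeleton, each of which is now a $2$-simplex of $L_0$. Consequently each connected component of $L_0$ is simply connected.

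If $L_0$ is connected I set $L=L_0$; otherwise I pick one vertex $v_j$ from each of the $r$ components of $L_0$ and adjoin the edges $\{v_1,v_2\},\{v_2,v_3\},\ldots,\{v_{r-1},v_r\}$, which are minimal non-faces (non-edges) of $K$ because their endpoints lie in distinct components. The resulting $L$ lies inside $\widehat{K}$, contains $K$, and is simply connected, being built from simply connected pieces glued along the tree $v_1v_2\cdots v_r$. Lemma \ref{null-homotopic} then delivers the null-homotopy of $\varphi_K$. The only non-trivial point is the chord-triangulation of cycles, which is the standard consequence of chordality and presents no serious obstacle.
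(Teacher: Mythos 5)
Your proposal is correct and follows essentially the same route as the paper: fill in all $2$-dimensional minimal non-faces (empty triangles), use chordality to conclude each component of the resulting complex is simply connected, link the components by edge minimal non-faces, and invoke Lemma~\ref{null-homotopic}. The only difference is cosmetic: where the paper simply cites that each component of $L'$ is the $2$-skeleton of the flag complex of a chordal graph and hence simply connected, you spell out the underlying chord-splitting induction on simple cycles.
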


\begin{proof}
Let $L'$ be a simplicial complex obtained from $K$ by adding all 2-dimensional minimal non-faces. Then since the 1-skeleton of $K$ is chordal, every connected component of $L'$ is the 2-skeleton of the flag complex of a chordal graph, implying that every connected component of $L'$ is simply connected. Thus by connecting components of $L'$ by minimal non-faces of dimension 1, we obtain a simply connected simplicial complex $L$ satisfying $K\subset L\subset\widehat{K}$. Therefore the proof is completed by Lemma \ref{null-homotopic}.
\end{proof}

\begin{corollary}
\label{hodim<2-surface}
Suppose $K$ is a 2-dimensional simplicial complex such that $\mathrm{hodim}\,K_I\le 1$ for any $\emptyset\ne I\subsetneq[m]$ and the 1-skeleton of $K$ is chordal. Then the fat wedge filtration of $\RZ_K$ is trivial if and only if $\varphi_K$ is null homotopic.
\end{corollary}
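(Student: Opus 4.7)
The ``only if'' direction is immediate from the definition of triviality of the fat wedge filtration, which requires $\varphi_{K_I}$ to be null homotopic for every $\emptyset\ne I\subset[m]$, in particular for $I=[m]$. So the real content is the converse, and my plan is to reduce it to Proposition \ref{hodim<2} applied to each proper full subcomplex $K_I$.

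First I would fix a proper nonempty subset $I\subsetneq[m]$ and verify that $K_I$ satisfies the hypotheses of Proposition \ref{hodim<2}. The bound $\mathrm{hodim}\,K_I\le 1$ is given. For chordality, I would observe that the 1-skeleton of $K_I$ is the induced subgraph on $I$ of the 1-skeleton of $K$, because $K_I$ is the full subcomplex on $I$; since chordality of a graph is inherited by induced subgraphs (any induced cycle in the subgraph is an induced cycle in the ambient graph), the 1-skeleton of $K_I$ is chordal. Proposition \ref{hodim<2} then yields that $\varphi_{K_I}$ is null homotopic.

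For the remaining index $I=[m]$, the map $\varphi_K=\varphi_{K_{[m]}}$ is null homotopic by the standing assumption of the ``if'' direction. Combining the two cases, $\varphi_{K_I}$ is null homotopic for every $\emptyset\ne I\subset[m]$, which by definition means the fat wedge filtration of $\RZ_K$ is trivial.

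There is essentially no obstacle here: the argument is a bookkeeping step that isolates $I=[m]$ as the only place the 2-dimensional behaviour of $K$ can fail to be absorbed by Proposition \ref{hodim<2}. The conceptual content sits in Proposition \ref{hodim<2} and in the observation that the hypotheses ``1-skeleton chordal'' and ``$\mathrm{hodim}\le 1$'' are both hereditary under passage to full subcomplexes on subsets of $[m]$.
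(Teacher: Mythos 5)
Your proof is correct and follows the same route as the paper: reduce the ``if'' direction to showing $\varphi_{K_I}$ is null homotopic for every proper nonempty $I$, which follows by applying Proposition \ref{hodim<2} to each $K_I$. The only difference is that you spell out the (true but unstated in the paper) observation that chordality passes to the induced subgraph $K_I^{(1)}$, which the paper leaves implicit.
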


\begin{proof}
The statement is equivalent to that under the same condition on $K$, $\varphi_{K_I}$ is null homotopic for any $\emptyset\ne I\subsetneq[m]$, which immediately follows from Proposition \ref{hodim<2}.
\end{proof}

\begin{proof}
[Proof of Theorem \ref{main1}]
The implication (2) $\Rightarrow$ (3) follows from Theorem \ref{neighborly}, and (3) $\Rightarrow$ (1) follows from Theorem \ref{RZ-decomp} since $\Z_K$ becomes a suspension. Then we show (1) implies (2). In the proof of \cite[Proposition 8.17]{IK} it is shown that if $K$ is Golod over some ring, then the 1-skeleton of $K$ is chordal. Since $K$ is a surface triangulation, we have $\mathrm{hodim}\,K_I\le 1$ for any $\emptyset\ne I\subsetneq[m]$. Then by Corollary \ref{hodim<2-surface} it remains to show that $\varphi_K$ is null homotopic. Assume (2) does not hold. Then there are vertices $v,w$ which are not connected by an edge. Since $K$ is a surface triangulation, the link of $K$, say $\mathrm{lk}_K(v)$, is homeomorphic to a circle. By considering the development of $K$ we see that $|\mathrm{lk}_K(v)|$ is a retract of the full subcomplex $|K_{[m]-\{v,w\}}|$. In particular, the composite 
$$|K|\to|(K_{[m]-\{v,w\}})*\{v,w\}|\cong|K_{[m]-\{v,w\}}|*|\{v,w\}|\to|\mathrm{lk}_K(v)|*|\{v,w\}|$$ 
is homotopic to the pinch map onto the top cell. Thus by Proposition \ref{Hochster}, $K$ is not Golod over $\mathbb{Z}/2$, completing the proof.
\end{proof}

We next prove Theorem \ref{main2}. 

\begin{proposition}
\label{Q-Golod}
Suppose $K$ is a 2-dimensional simplicial complex such that $\mathrm{hodim}\,K_I\le 1$ for any $\emptyset\ne I\subsetneq[m]$. If $|K|*S^{m-1}$ is rationally contractible, then $K$ is rationally Golod.
\end{proposition}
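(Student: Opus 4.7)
The plan is to prove rational Golodness of $K$ by showing that $\Z_K$ is, rationally, a wedge of suspensions. Any such wedge is a co-H-space, which forces all cup products and (higher) Massey products in $H^*(\Z_K;\mathbb{Q})\cong\mathrm{Tor}^+_{\mathbb{Q}[v_1,\ldots,v_m]}(\mathbb{Q}[K],\mathbb{Q})$ to vanish. To exhibit the decomposition I will show that the fat wedge filtration of $\Z_K$ from Theorem \ref{Z-cone} is rationally trivial, that is, each attaching map $\widehat{\varphi}_{K_I}\colon|K_I|*S^{|I|-1}\to\Z_{K_I}^{|I|-1}$ is rationally null homotopic.

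First, for the top piece $I=[m]$ the hypothesis is tailor-made: since $|K|*S^{m-1}$ is rationally contractible, any map with that domain is rationally null, so $\widehat{\varphi}_K$ is rationally null automatically. Next, for proper subsets $\emptyset\ne I\subsetneq[m]$, the plan is to deduce that $\varphi_{K_I}$ is null homotopic from the condition $\mathrm{hodim}\,K_I\le 1$ using Proposition \ref{hodim<2}, and then apply Proposition \ref{Z-RZ} to upgrade this to a null-homotopy of $\widehat{\varphi}_{K_I}$.

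The main obstacle will be this proper-subset step, since Proposition \ref{hodim<2} as stated also asks for chordality of the 1-skeleton of $K_I$, which is not built into the present hypotheses. My strategy is to drop back to the more flexible Lemma \ref{null-homotopic}: it suffices to produce, for each proper $I$, some simplicial complex $L$ with $K_I\subset L\subset\widehat{K_I}$ and $\mathrm{hodim}\,K_I\le\mathrm{conn}\,L$. Because we only need the conclusion rationally, $L$ need only be rationally simply connected, a weaker requirement that should be achievable by adjoining all 2-dimensional minimal non-faces of $K_I$ together with enough 1-dimensional minimal non-faces to kill $H_1(\,\cdot\,;\mathbb{Q})$. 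Once every $\widehat{\varphi}_{K_I}$ is rationally null, iterating the cone attachments in Theorem \ref{Z-cone} gives a rational analogue of the splitting in Theorem \ref{RZ-decomp}, and the proof is complete.
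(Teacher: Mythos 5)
Your top-level strategy is the same as the paper's: the hypothesis that $|K|*S^{m-1}$ is rationally contractible kills $(\widehat{\varphi}_K)_{(0)}$, and for each proper $I\subsetneq[m]$ one tries to get $\widehat{\varphi}_{K_I}$ null by first nulling $\varphi_{K_I}$ and then invoking Proposition \ref{Z-RZ}, after which one rationalizes the cone decomposition of Theorem \ref{Z-cone}. You have also correctly flagged a real subtlety: Proposition \ref{hodim<2} requires the $1$-skeleton to be chordal, which is not stated in the proposition's hypotheses. (The paper's own proof quietly imports this by citing Corollary \ref{hodim<2-surface}, whose hypotheses include chordality; in the application to $M(p)$, chordality is supplied separately by Lemma \ref{M}.)

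However, your proposed repair does not work. First, you cannot ``adjoin enough $1$-dimensional minimal non-faces to kill $H_1(\cdot;\mathbb{Q})$'': adding an edge to a complex either joins two components (reducing $H_0$) or creates a new $1$-cycle (enlarging $H_1$); it never decreases $H_1$. The simple connectivity of the intermediate $L$ in Proposition \ref{hodim<2} genuinely comes from chordality (the $2$-skeleton of the flag complex of a chordal graph is simply connected), and there is no substitute mechanism without it. Second, and more fundamentally, weakening ``simply connected'' to ``rationally simply connected'' does not give you what you need, because the null-homotopy of $\varphi_{K_I}\colon|K_I|\to\RZ_{K_I}^{|I|-1}$ has to be \emph{integral}: both source and target are typically non-simply-connected, so rational localization is not available there. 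This is exactly the content of the Remark following the proposition, which explains that one must work with the fat wedge filtration of $\Z_K$ (simply connected) rather than of $\RZ_K$ precisely because the latter is a sequence of cofibrations of non-simply-connected spaces. The correct shape of the argument is thus: integral null-homotopy of each $\widehat{\varphi}_{K_I}$ for proper $I$ (via chordality, Proposition \ref{hodim<2}, and Proposition \ref{Z-RZ}), and only at the very top step a rational null-homotopy of $\widehat{\varphi}_K$; the rational weakening cannot be pushed down to the proper full subcomplexes.
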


\begin{proof}
By Proposition \ref{Z-RZ} and Corollary \ref{hodim<2-surface}, $\widehat{\varphi}_{K_I}$ is null homotopic for any $\emptyset\ne I\subsetneq[m]$. If $|K|*S^{m-1}$ is rationally contractible, $(\widehat{\varphi}_K)_{(0)}$ is null homotopic. So since localization preserves homotopy cofibrations of simply connected spaces, we get that $(\Z_K)_{(0)}$ becomes a suspension by Theorem \ref{Z-cone}, competing the proof by \eqref{Z-Tor}.
\end{proof}

\begin{remark}
In the proof of Proposition \ref{Q-Golod} we must consider the fat wedge filtration of $\Z_K$ since that of $\RZ_K$ is a sequence of cofibrations of non-simply connected spaces which are not preserved by localization in general.
\end{remark}

We now construct a triangulation of the 2-dimensional mod $p$ Moore space which is rationally Golod but is not Golod over $\mathbb{Z}/p$. For $p=2$ let $M(p)$ be the triangulation of the projective plane illustrated in Figure 1 below. For $p>2$, let $M(p)$ be the triangulation of the 2-dimensional mod $p$ Moore space defined by: 
\begin{enumerate}
\item Take a $3p$-gon which is a $p$-fold cover of a triangle with vertices $v_1,v_2,v_3$;
\item Draw a $p$-gon with vertices $w_1,\ldots,w_p$, and give facets $\{v_1,v_2,w_i\},\{v_2,v_3,w_i\}$ for $i=1,\ldots,p$;
\item Drow one more $p$-gon with vertices $u_1,\ldots,u_p$, and give facets $\{v_3,v_1,u_i\},\{v_3,u_i,w_i\}$, $\{v_1,u_i,w_{i+1}\},\{u_i,w_i,w_{i+1}\}$ for $i\in\mathbb{Z}/p$;
\item Triangulate the $p$-gon $w_1\cdots w_p$ by giving facets $\{w_i,w_{i+1},w_p\}$ for $i=1,\ldots,p-2$.
\end{enumerate}
The case $p=4$ is illustrated in Figure 2 below. Let $G$ be a graph. For a vertex $v$, let $\mathtt{N}_G(v)$ be the neighborhood of $v$, that is, the set of vertices adjacent to $v$. We say that a set of vertices forms a clique in $G$ if each of two vertices in the set are adjacent. Let $\{a_1,\ldots,a_n\}$ be the vertex set of $G$. An ordering $a_1<\cdots<a_n$ is called a perfect elimination ordering if $\mathtt{N}_G(a_i)\cap\{a_{i+1},\ldots,a_n\}$ forms a clique in $G$ for $i=1,\ldots,n-1$. The following characterization of chordality is well known.

\begin{lemma}
\label{elimination}
A graph $G$ is chordal if and only if there is a perfect elimination ordering on the vertex set of $G$.
\end{lemma}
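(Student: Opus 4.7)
The plan is to prove the two implications separately: the ``perfect elimination ordering implies chordal'' direction is a direct cycle argument, while the converse is the classical theorem of Dirac and requires locating a simplicial vertex.

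For the easier direction, suppose $G$ admits a perfect elimination ordering $a_1<\cdots<a_n$, and let $C=c_1c_2\cdots c_k c_1$ be any cycle in $G$ of length $k\ge 4$. Let $c_j=a_i$ be the vertex of $C$ with the smallest index in the PEO. Its two cycle-neighbors $c_{j-1}$ and $c_{j+1}$ both have strictly larger index than $i$, so they lie in $\mathtt{N}_G(a_i)\cap\{a_{i+1},\ldots,a_n\}$. By hypothesis this set is a clique, so $c_{j-1}$ and $c_{j+1}$ are adjacent, producing a chord of $C$. Hence $G$ is chordal.

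For the converse I would induct on $n=|V(G)|$, the base case being trivial. The inductive step reduces to finding a \emph{simplicial} vertex $v$ of $G$ (one whose neighborhood $\mathtt{N}_G(v)$ is a clique): taking such a $v$ as $a_1$, the induced subgraph $G-v$ remains chordal, so the inductive hypothesis furnishes a PEO $a_2<\cdots<a_n$ on it, and the combined ordering is a PEO on $G$. The main obstacle is therefore Dirac's lemma that every chordal graph has a simplicial vertex, which I would prove by a secondary induction on $n$. If $G$ is complete, any vertex is simplicial. Otherwise pick non-adjacent vertices $u,w$, let $S$ be a minimal $u$-$w$ separator, and let $A$ (resp.\ $B$) be the vertex set of the component of $G-S$ containing $u$ (resp.\ $w$). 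A short chordality argument shows $S$ is itself a clique: otherwise two non-adjacent $s,s'\in S$ could be joined by shortest paths through $A$ and through $B$ (both exist by the minimality of $S$), yielding an induced cycle of length $\ge 4$. Applying the secondary induction to the proper chordal subgraph $G[A\cup S]$ yields a simplicial vertex $x$ of it; minimality of $S$ forces every vertex of $S$ to retain a neighbor in $B$, so $x\notin S$, hence $x\in A$. Since $x$ has no neighbors in $B$, its neighborhood in $G[A\cup S]$ coincides with $\mathtt{N}_G(x)$, so $x$ is simplicial in $G$, completing the induction.
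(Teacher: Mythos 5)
The paper does not prove this lemma; it simply invokes it as the well-known Dirac/Fulkerson--Gross characterization of chordality, so there is no ``paper proof'' to compare against, only the question of whether your argument is sound.

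Your forward direction is correct: in a cycle of length at least $4$, the vertex $a_i$ of smallest PEO index has both cycle-neighbors later in the order, they lie in the clique $\mathtt{N}_G(a_i)\cap\{a_{i+1},\ldots,a_n\}$ and hence are adjacent, and since they are at cycle-distance $2$ that edge is a genuine chord. The converse, however, has a real gap. After applying the secondary induction to $G[A\cup S]$ and obtaining a simplicial vertex $x$ of that subgraph, you assert that ``minimality of $S$ forces every vertex of $S$ to retain a neighbor in $B$, so $x\notin S$.'' This is a non sequitur: a vertex $s\in S$ can be simplicial in $G[A\cup S]$ no matter what its neighbors in $B$ look like, because $B$ is simply absent from that subgraph. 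A minimal example is the path $u$--$s$--$w$, where $S=\{s\}$, $A=\{u\}$, and $s$ is simplicial in $G[A\cup S]$ but certainly not in $G$; nothing in your argument prevents the inductive call from returning $s$. The standard repair is to strengthen the inductive statement to: every chordal graph is complete or has \emph{two non-adjacent} simplicial vertices. Applying that to the proper chordal subgraphs $G[A\cup S]$ and $G[B\cup S]$, and using the (correctly established) fact that $S$ is a clique so that two non-adjacent vertices cannot both lie in $S$, yields a simplicial vertex of $G$ in $A$ and another in $B$; since there are no edges between $A$ and $B$, these two are non-adjacent in $G$, which closes the strengthened induction and in particular gives the single simplicial vertex you need for the PEO.
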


For $p>2$ we have
\begin{alignat*}{3}
&\mathtt{N}_{M(p)^{(1)}}(v_i)&&=\{v_{i+1},v_{i+2},w_1,\ldots,w_p,u_1,\ldots,u_p\}&\quad\text{for }i\in\mathbb{Z}/3,\\
&\mathtt{N}_{M(p)^{(1)}}(w_i)&&=\{v_1,v_2,v_3,w_{i-1},w_{i+1},u_{i-1},u_i\}\cup W_i&\quad\text{for }i\in\mathbb{Z}/p,\\
&\mathtt{N}_{M(p)^{(1)}}(u_i)&&=\{v_1,v_3,w_i,w_{i+1}\}&\quad\text{for }i\in\mathbb{Z}/p,
\end{alignat*}
where $W_1=W_{p-1}=\emptyset,W_p=\{w_2,\ldots,w_{p-2}\}$ and $W_i=\{w_p\}$ for $i=2,\ldots,p-2$. Then $u_1<\cdots<u_p<w_1<\cdots<w_p<v_1<v_2<v_3$ is a perfect elimination ordering, so the 1-skeleton of $M(p)$ is chordal by Lemma \ref{elimination}. On the other hand, one easily sees that any proper full subcomplex of $M(p)$ is of homotopy dimension $\le 1$ for $p>2$. For $p=2$, one can easily verify that the 1-skeleton of $M(2)$ is chordal, and any proper full subcomoplex of $M(2)$ is of homotopy dimension $\le 1$. We here record the properties of $M(p)$ that we have observed.

\begin{lemma}
\label{M}
The 1-skeleton of $M(p)$ is chordal for any $p$, and any proper full subcomplex of $M(p)$ is of homotopy dimension $\le 1$. 
\end{lemma}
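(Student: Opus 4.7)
The plan is to verify the two assertions of Lemma \ref{M} independently, both being direct consequences of the explicit construction of $M(p)$ recalled in the paragraphs preceding the statement.

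For chordality when $p>2$, my approach is to apply Lemma \ref{elimination} to the ordering
$$u_1<\cdots<u_p<w_1<\cdots<w_p<v_1<v_2<v_3$$
suggested above. For each vertex $x$ I would intersect the explicit neighborhood $\mathtt{N}_{M(p)^{(1)}}(x)$ with the set of strictly later vertices in this ordering and confirm the result is a clique. For $u_i$ the later neighbours are $\{w_i,w_{i+1},v_1,v_3\}$, and every pair is an edge by items (1)--(3) of the construction. For $w_i$ the later neighbours lie inside $\{w_{i+1},\ldots,w_p,v_1,v_2,v_3\}$, and the required edges come from items (1), (2) together with the fan triangulation of item (4); in particular every edge $w_{i+1}w_p$ is witnessed by the facet $\{w_{i+1},w_{i+2},w_p\}$. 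For $v_1,v_2$ the check is immediate since $v_1v_2v_3$ is a simplex. The case $p=2$ is settled by direct inspection of Figure 1.

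For the homotopy dimension claim, fix $I\subsetneq[m]$ and pick $v\in[m]\setminus I$. The plan is to show that $|M(p)_I|$ collapses onto its 1-skeleton by iteratively removing free edges, which yields homotopy dimension $\le 1$. Since $M(p)$ is a pseudomanifold-type 2-complex in which each edge is a face of at most two 2-simplices, deletion of the open star of $v$ creates a ring of free edges (edges now lying in exactly one 2-simplex), and the attached 2-simplices can be collapsed sequentially through them. By the cyclic symmetries of the construction one only needs to treat a handful of vertex orbits: $v_i$ under the $\ZZ/3$ rotation, $u_i$ under the $\ZZ/p$ rotation, and $w_i$ in three combinatorially distinct positions (the apex $w_p$ of the fan in item (4), the neighbours $w_1,w_{p-1}$ of the apex, and an interior $w_j$).

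The main obstacle is verifying that in every case the sequence of collapses runs all the way to the 1-skeleton rather than stalling at a 2-simplex with no free face. This reduces to a finite mechanical case analysis around each of the small number of vertex orbits listed above, where the fan structure around the removed vertex immediately exhibits the initial free edges and the subsequent propagation is forced by the local combinatorics of items (1)--(4). Throughout, the vanishing of $H_2$ of the Moore space $M(p)$ serves as a sanity check that no 2-cycle can obstruct the collapse.
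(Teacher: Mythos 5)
Your chordality argument matches the paper exactly: the paper uses the very same ordering $u_1<\cdots<u_p<w_1<\cdots<w_p<v_1<v_2<v_3$, reads off the explicit neighborhoods it has just listed, and invokes Lemma \ref{elimination}, with the case $p=2$ left to inspection of Figure 1. That half of your proposal is fine.

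The homotopy-dimension half has a genuine error. You assert that $M(p)$ is ``a pseudomanifold-type 2-complex in which each edge is a face of at most two 2-simplices.'' For $p>2$ this is false. By item (2) of the construction, the edge $\{v_1,v_2\}$ lies in all $p$ facets $\{v_1,v_2,w_i\}$, $i=1,\ldots,p$, and similarly $\{v_2,v_3\}$ lies in the $p$ facets $\{v_2,v_3,w_i\}$ and $\{v_3,v_1\}$ lies in the $p$ facets $\{v_3,v_1,u_i\}$. This is exactly the singular locus created by taking a $3p$-gon that $p$-fold covers the triangle $v_1v_2v_3$; only $M(2)=\RR P^2$ is an actual surface. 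Consequently, deleting the open star of a vertex does not in general produce ``a ring of free edges'': after removing $v_1$, say, the edge $v_2v_3$ still lies in $p$ triangles, and the free edges one must start from are instead the fan edges $v_2w_i$ coming from the leftover $\{v_2,v_3,w_i\}$. The collapse can still be carried out, but the local picture you describe is wrong, and the symmetry reduction you rely on (a small number of orbits with a uniform ``link is a circle'' behavior) is not available in the form you state it.

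Two further cautions. First, the statement is about \emph{every} proper full subcomplex $M(p)_I$, not only $M(p)_{[m]\setminus\{v\}}$; removing more vertices changes the free-edge structure, so the single-vertex analysis does not automatically cover the general case. (One useful observation that does help globally: since $M(p)$ has no 3-simplices and $H_2(M(p);\ZZ)=0$, we have $Z_2(M(p))=0$, hence $Z_2(M(p)_I)=0$ for every $I$; so $H_2$ of every full subcomplex vanishes over any coefficients.) Second, your ``sanity check'' that $H_2=0$ prevents the collapse from stalling is not a valid implication; the dunce hat has $H_2=0$, no 3-cells, and no free edge at all. So vanishing of $H_2$ rules out 2-cycles but does not by itself guarantee that the elementary collapses terminate at the 1-skeleton; you still need to exhibit a free edge at each stage. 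The paper itself disposes of this point with ``one easily sees,'' so your instinct to supply a collapsing argument is reasonable, but as written it rests on a false structural claim about $M(p)$ and needs to be redone with the correct local combinatorics around the high-multiplicity edges $v_1v_2$, $v_2v_3$, $v_3v_1$.
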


We now prove the following which guarantees Theorem \ref{main2}.

\begin{theorem}
$M(p)$ is rationally Golod but is not Golod over $\mathbb{Z}/p$.
\end{theorem}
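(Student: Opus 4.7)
The plan is to handle the two assertions separately. Rational Golodness is immediate from Proposition \ref{Q-Golod}, while the failure of Golodness over $\mathbb{Z}/p$ is the substantive content and splits into the easy $p=2$ case, which follows from Theorem \ref{main1}, and the $p>2$ case, which I will handle by a direct application of Hochster's obstruction (Proposition \ref{Hochster}).

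For rational Golodness, I would apply Proposition \ref{Q-Golod}: Lemma \ref{M} supplies the hypothesis that every proper full subcomplex of $M(p)$ has homotopy dimension at most $1$, and $|M(p)|$ is a mod $p$ Moore space and hence rationally contractible, so $|M(p)|*S^{m-1}$ is as well.

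For failure over $\mathbb{Z}/p$, when $p=2$ the complex $M(2)$ is a surface triangulation of $\mathbb{RP}^2$ which is not $1$-neighborly (by inspection of Figure 1), so the implication $(1)\Rightarrow(2)$ of Theorem \ref{main1}, whose proof constructs a nontrivial Hochster obstruction over $\mathbb{Z}/2$, already gives the conclusion. For $p>2$ I would choose $I=\{u_1,u_2\}$ and $J=[m]-I$; since $u_1,u_2$ are nonadjacent, $|K_I|=S^0$ and $|K_I*K_J|\simeq\Sigma|K_J|$. Each $u_i$ is an interior manifold point of $|M(p)|$ with $|\mathrm{lk}(u_i)|$ the $4$-cycle on $\{v_1,v_3,w_i,w_{i+1}\}$, so $\mathrm{star}(u_1)$ and $\mathrm{star}(u_2)$ are closed $2$-disks meeting $|K_J|$ only in their boundaries and $|M(p)|/|K_J|\simeq S^2\vee S^2$. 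The $\mathbb{Z}/p$-homology long exact sequence of $(|M(p)|,|K_J|)$ becomes
\begin{equation*}
\mathbb{Z}/p=H_2(|M(p)|)\xrightarrow{\alpha}(\mathbb{Z}/p)^2\xrightarrow{\partial}H_1(|K_J|;\mathbb{Z}/p)\to\mathbb{Z}/p\to 0.
\end{equation*}
A local orientation computation (using that the mod $p$ fundamental $2$-chain of $M(p)$ comes from an orientation on the big disk whose boundary covers $v_1v_2v_3$ with degree $p$) shows $\alpha([M(p)])=(\epsilon_1,\epsilon_2)$ for some $\epsilon_i=\pm 1$, so $\alpha$ is injective. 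Exactness then forces $\mathrm{rank}\,\partial=1$, and combined with the relation $\epsilon_1[\mathrm{lk}(u_1)]+\epsilon_2[\mathrm{lk}(u_2)]=0$ this rules out either $[\mathrm{lk}(u_i)]$ vanishing in $H_1(|K_J|;\mathbb{Z}/p)$. Comparing this LES with that of the pair $(\Sigma|K_J|,|K_J|)$ through the inclusion $|M(p)|\hookrightarrow\Sigma|K_J|$ then shows that $[M(p)]$ maps to $(\epsilon_1[\mathrm{lk}(u_1)],\epsilon_2[\mathrm{lk}(u_2)])\ne 0$ in $H_2(\Sigma|K_J|,|K_J|;\mathbb{Z}/p)\cong H_1(|K_J|;\mathbb{Z}/p)^2$, so the absolute map $H_2(|M(p)|;\mathbb{Z}/p)\to H_2(\Sigma|K_J|;\mathbb{Z}/p)$ is nonzero. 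Dualising, Proposition \ref{Hochster} yields the failure of Golodness over $\mathbb{Z}/p$.

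The main obstacle I expect is the local orientation bookkeeping: I need to verify that $\alpha([M(p)])=(\pm 1,\pm 1)$, i.e., that the mod $p$ fundamental $2$-chain of $M(p)$ restricts to a unit multiple of the local orientation class of each pair $(\mathrm{star}(u_i),\mathrm{lk}(u_i))$. Once this is pinned down, the remainder is a mechanical exact-sequence chase.
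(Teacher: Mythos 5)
Your rational-Golodness argument and the $p=2$ case coincide with the paper's (the paper re-derives the $\mathbb{Z}/2$ obstruction explicitly using vertices $6,7$ of $M(2)$ rather than citing Theorem~\ref{main1}, but the content is identical). For $p>2$ you take a genuinely different route to the Hochster obstruction. The paper picks $I=\{w_1,w_{p-1}\}$, asserts that $|\mathrm{lk}(w_1)|$ is a retract of $|M(p)_{[m]-I}|$, and argues that the resulting composite $|M(p)|\to|\mathrm{lk}(w_1)|*|I|\cong S^2$ is the pinch map onto the top cell, mimicking verbatim the surface argument from the proof of Theorem~\ref{main1}. You instead pick $I=\{u_1,u_2\}$ and replace the retraction/pinch-map step by an excision and long-exact-sequence argument for the pair $(|M(p)|,|K_J|)$: since each $u_i$ has a genuine disk neighborhood (even though $M(p)$ is not a surface), $|M(p)|/|K_J|\simeq S^2\vee S^2$, the mod $p$ fundamental chain restricts to a unit at each $\mathrm{star}(u_i)$ because it is supported on every $2$-simplex (the interior dual graph of the big $3p$-gon disk is connected and interior edges lie on exactly two triangles), and exactness then forces $[\mathrm{lk}(u_i)]\ne 0$ in $H_1(|K_J|;\mathbb{Z}/p)$, from which nontriviality of $H_2(|M(p)|;\mathbb{Z}/p)\to H_2(\Sigma|K_J|;\mathbb{Z}/p)$ follows by comparing with the LES of $(\Sigma|K_J|,|K_J|)$. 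Your argument is self-contained where the paper's is terse (the retraction claim, which is clear for surfaces, is not justified for the non-manifold $M(p)$), and your choice of $I$ is more robust: for $p=3$ the paper's vertices $w_1,w_{p-1}=w_2$ are in fact adjacent (they span the edge $\{w_1,w_2\}$), so $|K_I|$ is contractible and the stated Hochster pair gives no obstruction, whereas $u_1,u_2$ are nonadjacent for every $p>2$. The only place you should tighten the write-up is the ``local orientation computation'': rather than invoking the big-disk orientation heuristically, the clean statement is that a nonzero mod $p$ $2$-cycle on $M(p)$ has nonzero coefficient on every triangle, because the interior $1$-simplices (those other than $v_1v_2,v_2v_3,v_3v_1$) lie on exactly two facets and the dual adjacency graph is connected; this yields $\epsilon_i\in(\mathbb{Z}/p)^\times$ directly, which is all the exact-sequence chase needs.
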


\begin{proof}
By Proposition \ref{Q-Golod} and Lemma \ref{M}, $M(p)$ is rationally Golod since $\Sigma M(p)$ is rationally contractible. We prove $M(p)$ is not Golod over $\mathbb{Z}/p$ by the same method as the proof of Theorem \ref{main1}. For $p=2$, $|\mathrm{lk}_{M(2)}(7)|$ is a retract of $|M(2)_{[5]}|$, and the composite
$$|M(2)|\to|M(2)_{[5]}*\{6,7\}|=|M(2)_{[5]}|*|\{6,7\}|\to|\mathrm{lk}_{M(2)}(7)|*|\{6,7\}|\cong S^2$$
is homotopic to the pinch map onto the top cell. Then the composite is non-trivial in the 2-dimensional mod 2 cohomology, hence $M(2)$ is not Golod over $\mathbb{Z}/2$. For $p>2$, we choose $w_1,w_{p-1}$ instead of $6,7$ for $M(2)$, and can show that $M(p)$ is Golod over $\mathbb{Z}/p$ by the same way. Therefore the proof is completed.
\end{proof}

\begin{figure}[htbp]
\begin{center}
\setlength\unitlength{1.1mm} 
\begin{tabular}{cc}
\begin{minipage}{0.35\hsize}
\begin{center}
\begin{picture}(50,50)(-25,-25)
\Thicklines
\drawline(21.65,12.5)(0,25)(-21.65,12.5)(-21.65,-12.5)(0,-25)(21.65,-12.5)(21.65,12.5)
\drawline(-21.65,12.5)(21.65,12.5)(0,-25)(-21.65,12.5)
\drawline(0,12.5)(-10.9,-6.4)(10.9,-6.4)(0,12.5)
\drawline(-21.65,-12.5)(0,0)
\drawline(21.65,-12.5)(0,0)
\drawline(0,25)(0,0)
\put(0,0){\circle*{1.5}}
\put(0,12.5){\circle*{1.5}}
\put(-10.9,-6.4){\circle*{1.5}}
\put(10.9,-6.4){\circle*{1.5}}
\put(21.65,12.5){\circle*{1.5}}
\put(0,25){\circle*{1.5}}
\put(-21.65,12.5){\circle*{1.5}}
\put(-21.65,-12.5){\circle*{1.5}}
\put(0,-25){\circle*{1.5}}
\put(21.65,-12.5){\circle*{1.5}}
\put(-3,14){$1$}
\put(-12,-11.2){$2$}
\put(10.4,-11.2){$3$}
\put(-3,26){$4$}
\put(-25,12.5){$5$}
\put(-25,-12.5){$6$}
\put(1.5,-28){$4$}
\put(23.2,-12.5){$5$}
\put(23.2,12.5){$6$}
\put(-1,-4.8){$7$}
\end{picture}
\caption{$M(2)$}
\end{center}
\end{minipage}
\hspace{8mm}
\begin{minipage}{0.45\hsize}
\begin{center}
\begin{picture}(60,60)(-30,-30)
\Thicklines
\drawline(30,0)(25.98,15)(15,25.98)(0,30)(-15,25.98)(-25.98,15)(-30,0)(-25.98,-15)(-15,-25.98)(0,-30)(15,-25.98)(25.98,-15)(30,0)
\drawline(30,0)(15,25.98)
\drawline(25.98,-15)(0,-30)
\drawline(-15,-25.98)(-30,0)
\drawline(-25.98,15)(0,30)
\drawline(-15,25.98)(-13,22.5)
\drawline(15,-25.98)(13,-22.5)
\drawline(-25.98,-15)(25.98,15)
\drawline(-13,22.5)(-22.5,-13)(13,-22.5)(22.5,13)(-13,22.5)
\drawline(-13,22.5)(-30,0)
\drawline(-22.5,-13)(-25.98,15)
\drawline(13,-22.5)(30,0)
\drawline(22.5,13)(25.98,-15)
\drawline(0,30)(22.5,13)
\drawline(15,25.98)(-13,22.5)
\drawline(0,-30)(-22.5,-13)
\drawline(-15,-25.98)(13,-22.5)
\put(-13,22.5){\circle*{1.5}}
\put(13,-22.5){\circle*{1.5}}
\put(-22.5,-13){\circle*{1.5}}
\put(22.5,13){\circle*{1.5}}
\put(-24.9,6.65){\circle*{1.5}}
\put(24.9,-6.65){\circle*{1.5}}
\put(-6.7,-24.9){\circle*{1.5}}
\put(6.7,24.9){\circle*{1.5}}
\put(30,0){\circle*{1.5}}
\put(25.98,15){\circle*{1.5}}
\put(15,25.98){\circle*{1.5}}
\put(0,30){\circle*{1.5}}
\put(-15,25.98){\circle*{1.5}}
\put(-25.98,15){\circle*{1.5}}
\put(-30,0){\circle*{1.5}}
\put(-25.98,-15){\circle*{1.5}}
\put(-15,-25.98){\circle*{1.5}}
\put(0,-30){\circle*{1.5}}
\put(15,-25.98){\circle*{1.5}}
\put(25.98,-15){\circle*{1.5}}
\put(31.5,-0.5){$v_1$}
\put(27,15.5){$v_2$}
\put(16,27){$v_3$}
\put(-1.2,32){$v_1$}
\put(-19.7,27){$v_2$}
\put(-30.7,16){$v_3$}
\put(-34.5,-0.5){$v_1$}
\put(-30,-18){$v_2$}
\put(-18,-29){$v_3$}
\put(-1.2,-33.5){$v_1$}
\put(16,-29){$v_2$}
\put(27,-18){$v_3$}
\put(-12.5,18.5){$w_1$}
\put(-18.8,-13.2){$w_2$}
\put(9,-19.8){$w_3$}
\put(14.5,12){$w_4$}
\put(-23.2,5){$u_1$}
\put(-7.5,-22.7){$u_2$}
\put(20,-6){$u_3$}
\put(5,21){$u_4$}
\end{picture}
\caption{$M(4)$}
\end{center}
\end{minipage}
\end{tabular}
\end{center}
\end{figure}

\end{document}